\documentclass[11pt]{amsart}

\usepackage{amssymb,amsthm,amsmath,hyperref,enumitem}
\usepackage[alphabetic]{amsrefs}

\usepackage{xypic}

\usepackage[margin=2.9cm]{geometry}

\newtheorem{theorem}{Theorem}[section]
\newtheorem{corollary}[theorem]{Corollary}
\newtheorem{lemma}[theorem]{Lemma}
\newtheorem{proposition}[theorem]{Proposition}

\theoremstyle{remark}
\newtheorem{remark}[theorem]{Remark}
\newtheorem*{remark*}{Remark}
\newtheorem{example}[theorem]{Example}

\numberwithin{equation}{section}

\newcommand{\N}{\mathbb{N}}
\newcommand{\Z}{\mathbb{Z}}

\newcommand{\C}{\mathbb{C}}

\newcommand{\Cu}{\mathrm{Cu}}

\newcommand{\her}{\mathrm{her}}

\renewcommand{\epsilon}{\varepsilon}
\renewcommand{\leq}{\leqslant}
\renewcommand{\geq}{\geqslant}

\title{Nuclear dimension and sums of commutators}
\author{Leonel Robert}
\address{
Department of Mathematics
University of Louisiana at Lafayette
Lafayette, USA.
}
\email{lrobert@louisiana.edu}
\subjclass[2010]{46L05}

\begin{document}
\begin{abstract}
The problem of expressing a selfadjoint element that is zero on every bounded trace as a finite sum (or a limit of sums)  of commutators is investigated in the setting of C*-algebras of finite nuclear dimension. Upper bounds -- in terms of the nuclear dimension of the C*-algebra -- are given for the number of commutators needed in these sums. An example is given of a simple, nuclear C*-algebra (of infinite nuclear dimension) with a unique tracial state and with elements that vanish on all bounded traces and yet are ``badly" approximated by finite sums of commutators. Finally, the same problem is investigated on (possibly non-nuclear) simple unital C*-algebras assuming suitable regularity properties
in their Cuntz semigroups. 
\end{abstract}

\maketitle

\section{Introduction}
Let $A$ be a C*-algebra. Let $A_0$ denote the intersection of the kernels of all bounded traces on $A$ further intersected with
 the selfadjoint elements of $A$. A theorem of Cuntz and Pedersen says that $A_0$ agrees with the set of elements of the form $\sum_{i=1}^\infty [x_i^*,x_i]$, with $ x_i\in A$, and where the infinite sum is norm convergent. 
A line of research pursued by several authors -- \cite{fack}, \cite{thomsen}, \cite{pop}, \cite{marcoux}, \cite{kaftal}, \cite{ng1} -- has been to find conditions on $A$ under which the elements of $A_0$ are expressible as finite, rather than infinite, sums of self-commutators (i.e., commutators of the form $[x^*,x]$). In this paper we explore the link between this question and the nuclear dimension of C*-algebras. The latter is a notion of dimension for C*-algebras due to Winter and Zacharias, which extends the covering dimension of topological spaces to the non-commutative setting (see \cite{winter-zacharias}).
We prove the following theorem:
\begin{theorem}\label{thm:nuclearcommutators}
Let $A$ be a C*-algebra of nuclear dimension $m\in \N$. 
If $a\in A_0$  then $a$ is a limit of sums of $m+1$ commutators of the form $[x^*,x]$,
with $\|x\|^2\leq 2\|a\|$. 
\end{theorem}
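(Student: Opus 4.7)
The plan is to use the $(m+1)$-colored c.p.c.\ order-zero approximation provided by nuclear dimension $m$ to realize $a$, up to arbitrary error, as a sum of $m+1$ self-commutators. Given $\epsilon>0$, I would first use Cuntz--Pedersen to approximate $a$ in norm by a finite sum of self-commutators $a \approx \sum_{k=1}^N [x_k^*, x_k]$ (by truncating the convergent infinite sum). Then, invoking the definition of nuclear dimension $m$, I would find a finite-dimensional C*-algebra $F = F^{(0)} \oplus \cdots \oplus F^{(m)}$, a c.p.c.\ map $\psi\colon A \to F$, and c.p.c.\ order-zero maps $\phi^{(i)}\colon F^{(i)} \to A$ such that $\sum_i \phi^{(i)}\psi^{(i)}$ closely approximates the identity on $\{a, x_k, x_k^*\}$; I would additionally arrange that $\psi$ is approximately multiplicative on $\{x_k, x_k^*\}$, which is available since $A$ is nuclear (being of finite nuclear dimension) and the nuclear-dimension decomposition admits a joint refinement with approximate multiplicativity.

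This approximate multiplicativity, combined with the finite-sum expression for $a$, forces $\psi(a)$ to lie within $\epsilon$ of $F_0 := \bigoplus_i F_0^{(i)}$, since $\psi(a) \approx \sum_k [\psi(x_k)^*, \psi(x_k)] \in F_0$. Thus I decompose $\psi(a) = c + d$ with $c = (c_i)_i \in F_0$ and $d$ central in $F$ with $\|d\|$ small. Each $c_i \in F_0^{(i)}$ is a direct sum of traceless selfadjoint elements in the matrix blocks of $F^{(i)}$, so by the norm-optimal matrix self-commutator construction applied block-by-block, $c_i = [y_i^*, y_i]$ in $F^{(i)}$. A bookkeeping argument yields $\|y_i\|^2 \leq 2\|a\|$: since $c_i = \psi^{(i)}(a) - d_i$ and $\|d_i\| \leq \|\psi^{(i)}(a)\|$, one has $\|c_i\| \leq 2\|\psi^{(i)}(a)\| \leq 2\|a\|$, so the norm bound for the matrix commutator construction propagates to the desired bound.

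The key step is then lifting $c_i = [y_i^*, y_i]$ to a self-commutator in $A$ via the Winter--Zacharias structure theorem: each $\phi^{(i)}$ extends to a $*$-homomorphism $\Phi^{(i)}\colon C_0((0,1]) \otimes F^{(i)} \to A$ with $\Phi^{(i)}(\iota \otimes x) = \phi^{(i)}(x)$, where $\iota$ is the identity function on $(0,1]$. Setting $X_i := \Phi^{(i)}(\iota^{1/2} \otimes y_i) \in A$ and using that $\Phi^{(i)}$ is a $*$-homomorphism, one verifies
\[
[X_i^*, X_i] = \Phi^{(i)}\bigl(\iota \otimes [y_i^*, y_i]\bigr) = \phi^{(i)}(c_i), \qquad \|X_i\|^2 = \|\phi^{(i)}(y_i^* y_i)\| \leq \|y_i\|^2 \leq 2\|a\|.
\]
Summing gives $a \approx \phi\psi(a) = \sum_i \phi^{(i)}(c_i) + \phi(d) = \sum_{i=0}^m [X_i^*, X_i] + \phi(d)$ with $\|\phi(d)\|$ small. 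Letting $\epsilon \to 0$ produces $a$ as the required limit of sums of $m+1$ self-commutators with $\|X_i\|^2 \leq 2\|a\|$.

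The main technical obstacle I anticipate is the joint refinement in the first step: arranging the nuclear-dimension decomposition with $\psi$ simultaneously approximately multiplicative on the prescribed finite set. Without this, $\psi(a)$ need not be controlled near $F_0$ and the central correction $\phi(d)$ cannot be made small, so the count of self-commutators would grow with $N$ rather than being capped at $m+1$. Once this simultaneous refinement is secured, the remaining steps are a clean application of the order-zero structure theorem together with the standard norm-optimal self-commutator representation for traceless selfadjoint matrices.
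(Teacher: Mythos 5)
Your overall architecture is the same as the paper's: push $a$ down to the finite-dimensional (colored) level, write the image there as a single self-commutator via Fack's lemma with the norm bound $\|y\|^2\leq 2\|c\|$, and lift back through the order-zero structure theorem $X_i=\Phi^{(i)}(\iota^{1/2}\otimes y_i)$. That last step and the counting of $m+1$ commutators are exactly right. The problem is the step you yourself flag as the main obstacle: the claim that the downward map $\psi$ can be arranged to be approximately multiplicative on the prescribed finite set $\{x_k,x_k^*\}$. This is \emph{false} for general C*-algebras of finite nuclear dimension. If for every finite set and every $\epsilon$ one could choose c.p.c.\ maps $\psi\colon A\to F$ into finite-dimensional algebras that are approximately multiplicative (and approximately isometric, as they must be when $\phi\psi\approx\mathrm{id}$), then $A$ would be quasidiagonal. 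The Cuntz algebra $\mathcal O_2$ has nuclear dimension $1$ but is not quasidiagonal; concretely, no c.p.c.\ map into a finite-dimensional algebra can be approximately multiplicative on the generating isometries $s_1,s_2$, since an approximate isometry in a finite-dimensional C*-algebra is approximately unitary, forcing $\psi(s_2)\approx 0$ while $\psi(s_2)^*\psi(s_2)\approx 1$. So the lemma your argument hinges on has no proof, and without it $\psi(a)$ need not be close to $F_0$ and the central correction $\phi(d)$ cannot be controlled.

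The correct and available substitute -- and this is what the paper uses -- is the Winter--Zacharias refinement that each \emph{color} $\psi^k$ (not the total map $\psi$) can be chosen asymptotically of order zero. Order zero is much weaker than multiplicativity but is exactly what is needed: an order-zero map sends a self-commutator $[z^*,z]$ to the self-commutator $[w^*,w]$ with $w=\pi^{\psi}(z\otimes t^{1/2})$, hence sends $A_0$ into the Cuntz--Pedersen subspace of the target; no tracelessness of $\psi^k(a)$ has to be extracted from approximate multiplicativity. The paper packages the ``asymptotically'' by working in the sequence algebras $N^k=\prod_\lambda C^k_\lambda/\bigoplus_\lambda C^k_\lambda$, where the induced $\psi^k$ is exactly order zero and $\psi^k(a)\in (N^k)_0$ on the nose; its Lemma 3.1 then extends Fack's single-self-commutator statement from finite-dimensional algebras to such quotients by a diagonal argument, and the whole computation takes place in $A_\infty$, eliminating your central correction term $d$ entirely. (Separately, a minor bookkeeping slip: from $\|c_i\|\leq 2\|\psi^{(i)}(a)\|$ Fack's bound gives $\|y_i\|^2\leq 4\|a\|$, not $2\|a\|$; this is harmless since $d_i\to 0$, but the stated inequality chain does not yield the advertised constant.)
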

In \cite{fack}, Fack devised a technique  -- further developed in \cite{thomsen} and \cite{ng1} -- to show that elements of $A_0$, under suitable hypothesis, are finite sums of commutators. We generalize Fack's technique to prove the following theorem: 
\begin{theorem}\label{thm:fack}
Let $m\in \N$. There exists $M=O(m^3)$ such that if  $A$ is a  unital C*-algebra of nuclear dimension $m\in \N$, with no  finite dimensional representations and no simple purely infinite quotients, then  each element of $A_0$ is exactly the sum of $M$ self-commutators.
\end{theorem}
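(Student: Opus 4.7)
The plan is to derive Theorem \ref{thm:fack} from Theorem \ref{thm:nuclearcommutators} by combining the approximating commutator sums it produces into a single exact finite sum, in the spirit of Fack's telescoping technique. Given $a \in A_0$, I would first use Theorem \ref{thm:nuclearcommutators} iteratively to construct a sequence $a_n \to a$ with $\|a - a_n\| \leq 2^{-n}\|a\|$ and such that each difference $a_n - a_{n-1} \in A_0$ is itself written as a sum of $2(m+1)$ self-commutators $[x_{n,i}^*, x_{n,i}]$ with the essential norm control $\|x_{n,i}\|^2 \leq C \cdot 2^{-n}\|a\|$ for an absolute constant $C$ depending on $\|a\|$. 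This reduces matters to showing that a rapidly convergent infinite series of self-commutators $\sum_n [y_n^*, y_n]$, with geometrically decaying $\|y_n\|^2$, can be realised as a \emph{finite} sum of self-commutators under the structural hypotheses on $A$.

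The core of the argument is a generalisation of Fack's compilation lemma: in a unital C*-algebra with no finite dimensional representations and no simple purely infinite quotients, one can locate, inside a suitable hereditary subalgebra, a sequence of pairwise orthogonal nonzero positive contractions $(h_k)$ together with partial isometries implementing an internal shift among them. I would then regroup the sequence $(y_n)$ into a fixed number of parallel streams and, using these partial isometries, assemble each stream into a single element $Y$ so that $[Y^*, Y]$ captures the infinite sum $\sum_n [y_n^*, y_n]$ coming from that stream, modulo a correction that is itself manifestly a finite sum of commutators. The absence of finite dimensional representations is what supplies the required pairwise orthogonal positive elements (via a dichotomy-type excision argument), while the exclusion of simple purely infinite quotients is needed so that Fack's telescoping mechanism, rather than a properly infinite absorption, is the available path -- purely infinite quotients would obstruct the norm-controlled positive elements that make the telescoping close up.

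The main obstacle I foresee is the quantitative bookkeeping behind the bound $M = O(m^3)$. Theorem \ref{thm:nuclearcommutators} contributes an initial factor of $m+1$ from the number of commutators used at each telescoping stage. The Fack-style compilation must handle these $m+1$ streams in parallel, but within each stream the partial isometry shift furnished by the $(m+1)$-colored data from nuclear dimension $m$ appears to cost a further factor of $O(m)$; and rewriting the intermediate arbitrary commutators $[x,y]$ produced by the compilation back to self-commutators $[z^*, z]$ while retaining control of the norms contributes a final factor of $O(m)$. The product of these three factors yields the claimed cubic bound. Verifying that the norm estimates from Theorem \ref{thm:nuclearcommutators} propagate through the absorption manipulations, and ensuring that the structural hypotheses are enough to realise the shift at every scale uniformly in $n$, is what I expect to be the technically most delicate part of the argument.
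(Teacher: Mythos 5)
Your overall strategy --- telescoping the approximation from Theorem \ref{thm:nuclearcommutators} and compiling the resulting infinite series of commutators \`a la Fack using orthogonal positive elements supplied by the structural hypotheses --- is the same as the paper's. But the central mechanism is missing, and the substitute you propose would not work as stated. Writing $a=\sum_n b_n$ with each $b_n$ an exact sum of $m+1$ self-commutators of geometrically decaying norm is fine; the problem is that the elements $x_{n,i}$ appearing in these commutators are global elements of $A$ with no orthogonality among the different stages $n$. The compilation trick $\sum_n [c_n,d_n]=[\sum_n c_n,\sum_n d_n]$ requires $c_n\perp d_{n'}$, $c_n\perp c_{n'}$, $d_n\perp d_{n'}$ for $n\neq n'$, so you cannot ``assemble each stream into a single element $Y$'' unless the successive remainders have first been \emph{moved} into pairwise orthogonal hereditary subalgebras. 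The paper does this with a transfer lemma (Lemma \ref{lem:trapecio}): given a Cuntz comparison $[e_i]\leq K[(e_{i+1}-\epsilon_{i+1})_+]$ between consecutive members of an orthogonal sequence, every $x\in\her((e_i-\epsilon_i)_+)$ equals a sum of $K$ commutators (built from the comparison data, with norm control) plus a remainder lying in $\her(e_{i+1})$. The proof alternates this transfer step with the approximation step of Theorem \ref{thm:nuclearcommutators} applied \emph{inside} the next hereditary subalgebra, which is what makes the supports orthogonal and lets the two interleaved series close up into finitely many commutators. Your sketch has the approximation step but not the transfer step, and without it the series does not compile.

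A related point is that the ``partial isometries implementing an internal shift'' you invoke do not exist in this generality: what the hypotheses (no finite dimensional representations, no simple purely infinite quotients) actually yield, via the construction in Proposition \ref{prop:orthoseq}, is only a Cuntz subequivalence $[e_i]\leq K[(e_{i+1}-\epsilon_{i+1})_+]$ with multiplicity $K=(2m+2)(2m+3)^2=O(m^3)$, not a multiplicity-one shift. This is also where your bookkeeping goes wrong: the cubic bound is not a product of three $O(m)$ factors (streams, shift cost, self-commutator conversion); it is this single multiplicity $K$, with the $m+1$ from Theorem \ref{thm:nuclearcommutators} entering only through a $\max(M,K)$ and the passage from commutators to self-commutators costing only a factor of $2$ by Marcoux's result quoted in Section \ref{prelims}.
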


The hypothesis of no simple purely infinite quotients is most likely unnecessary, but our present method for constructing ``large" orthogonal positive elements in $A$ requires it. 

In \cite{marcoux}, Marcoux gives a ``reduction argument" for the number of commutators
needed for expressing  elements of $A_0$. Ng further developed this method in   \cite{ng1} and \cite{ng2}, showing for example that a unital embedding of the Jiang-Su algebra 
is often sufficient to make Marcoux's reduction  work.  Theorem \ref{thm:nuclearcommutators} above, combined with \cite[Theorem 3.2]{ng2}, yields the following corollary:  if $A$ is a unital C*-algebra of finite nuclear dimension, and the Jiang-Su algebra $\mathcal Z$ embeds  unitally in $A$, then every element of $A_0$ is the sum of three commutators (see Corollary \ref{cor:threecommutators}). In Section \ref{smallnumber} we relax the assumption of having an embedding of the Jiang-Su algebra by asking for an embedding of a dimension drop algebra $\mathcal Z_{n,n+1}$:
\begin{theorem}\label{thm:reduction}
Let $A$ be a unital C*-algebra of nuclear dimension $m\in \N$,  with no  finite dimensional representations and no purely infinite quotients.
Suppose also that $A$ admits a unital embedding of a  dimension drop algebra $\mathcal Z_{n,n+1}$ for $n$ large enough (depending only on $m$). Then each element of $A_0$ is the sum of three commutators.
\end{theorem}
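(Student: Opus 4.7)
My plan is to follow the same strategy used to derive the Jiang--Su corollary in the introduction, namely Theorem~\ref{thm:nuclearcommutators} combined with Ng's reduction \cite[Theorem~3.2]{ng2}, but with the unital embedding of $\mathcal{Z}$ replaced by a unital embedding of $\mathcal{Z}_{n,n+1}$ for $n$ sufficiently large depending on $m$.

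First I would apply Theorem~\ref{thm:nuclearcommutators}: for any $a\in A_0$, it produces a norm-approximating sequence by sums of $m+1$ self-commutators
\[
a=\lim_{k\to\infty}\sum_{i=1}^{m+1}[x_{i,k}^{*},x_{i,k}],\qquad \|x_{i,k}\|^{2}\leq 2\|a\|,
\]
with no further structural hypothesis on $A$. Next I would feed this approximation into a reduction machine in the spirit of Marcoux \cite{marcoux} and Ng \cite{ng2}, adapted to the finite divisibility carried by $\mathcal{Z}_{n,n+1}$. Since $\mathcal{Z}$ is an inductive limit of the algebras $\mathcal{Z}_{n,n+1}$, one expects that for large $n$ the algebra $\mathcal{Z}_{n,n+1}$ captures enough of the regularity of $\mathcal{Z}$ that Ng's proof actually uses: it contains $n$ pairwise orthogonal, approximately unitarily equivalent positive contractions (obtained from a suitable partition of the interval parameter), together with enough tracial uniformity to support the Cuntz-comparison and matrix-trick arguments. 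The absence of finite dimensional representations prevents pathologies coming from tracial atoms, while the absence of purely infinite quotients guarantees that the Cuntz comparisons required to rearrange the $m+1$ approximate commutators into $3$ exact commutators do not collapse into an infinite-type regime.

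The hard part is the second step: verifying that Ng's proof in \cite[Theorem~3.2]{ng2} -- which exploits $\mathcal{Z}$ both through its central-sequence approximate divisibility and through its absorption properties -- still runs with the weaker finite divisibility of $\mathcal{Z}_{n,n+1}$, and then extracting from it an explicit threshold $n=n(m)$, presumably of the form $n\geq C(m+1)$ for some universal $C$. A second technical point is to deal with the non-trivial endpoint fibres $M_n$ and $M_{n+1}$ of $\mathcal{Z}_{n,n+1}$, so that the $n$ orthogonal positive contractions sitting inside $A$ are genuinely approximately unitarily equivalent (not merely Cuntz equivalent) with explicit unitaries that behave well as $k\to\infty$; this is what allows the limit produced by Theorem~\ref{thm:nuclearcommutators} to be converted into an \emph{exact} sum of three commutators rather than just a limit of such sums.
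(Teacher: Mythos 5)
There is a genuine gap: your proposal is a plan that defers the entire substance of the argument to ``verifying that Ng's proof in \cite[Theorem~3.2]{ng2} still runs with the weaker finite divisibility of $\mathcal Z_{n,n+1}$,'' and that verification is precisely what the theorem requires. Moreover, the starting point is off: you begin from Theorem~\ref{thm:nuclearcommutators}, which only gives $a$ as a \emph{limit} of sums of $m+1$ self-commutators, and then hope to upgrade limits to exact sums. The paper's proof does not proceed this way. It uses the embedding of $\mathcal Z_{n,n+1}$ to produce pairwise orthogonal positive elements $e_1,\dots,e_n$ and $d$ with $e_1=x_i^*x_i$, $x_ix_i^*=e_i$ (exact algebraic relations coming from the dimension drop algebra, so there is no issue of approximate unitary equivalence of the $e_i$'s, contrary to what you worry about at the end), writes $a=[x,y]+b$ with $b\in\her\bigl(\sum_i(e_i-\tfrac12)_+\bigr)\cong M_n\bigl(\her((e_1-\tfrac12)_+)\bigr)$ via \cite[Lemma 2.5]{ng2}, and then shows $b$ is a sum of \emph{two} commutators by Lemma~\ref{lem:matrices2}. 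That last step is where the count of three comes from, and your proposal never identifies it.

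The crucial point you miss is that Lemma~\ref{lem:matrices2} needs the diagonal sum $\sum_{i=1}^n b_{i,i}$ to be \emph{exactly} a sum of $n$ commutators with entries in $\her((e_1-\tfrac12)_+)$. This is supplied not by Theorem~\ref{thm:nuclearcommutators} but by the Fack-type machinery of Section~\ref{finitelymany}: Proposition~\ref{prop:orthoseq} builds an infinite sequence of orthogonal positive elements in $\her(e_1)$ satisfying the Cuntz comparison \eqref{trapecio}, and Proposition~\ref{prop:fack} converts the approximate statement into an exact sum of $O(m^3)$ commutators. This is exactly where the hypotheses of no finite dimensional representations and no purely infinite quotients are used (to run Proposition~\ref{prop:orthoseq} inside $\her(e_1)$, which is full since $[1]\leq(n+1)[e_1]$), and it is where the threshold $n=O(m^3)$ comes from -- not from a count of the form $n\geq C(m+1)$ tied to the $m+1$ approximate commutators as you suggest. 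Without invoking this exact-commutator machinery, the passage from a limit of sums of commutators to an exact sum of three commutators has no support in your argument.
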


An example of Pedersen and Petersen shows that  for each $m\in \N$ there exists a homogeneous C*-algebra $A$ of nuclear dimension $2m$ and $a\in A_0$ such that $a$ is not a limit of sums of $m+1$ self-commutators (see \cite[Lemma 3.5]{pedersen-petersen} and \cite[Theorem 2.1]{tristan-farah}). In Section \ref{sec:example} we construct a variety of examples of the same nature. We then use a construction \`a la Villadsen (of the second type) to prove the following theorem:

\begin{theorem}\label{thm:example}
There exists a simple unital AH C*-algebra $A$ with a unique tracial state such that for each $m\in \N$ there exists a contraction $a_m\in A_0$ such that for all $x_1,x_2,\dots,x_m\in A$ we have
$\|a_m-\sum_{i=1}^m [x_i^*,x_i]\|\geq 1$.
\end{theorem}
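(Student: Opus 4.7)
The plan is to realise $A$ as a simple, unital AH inductive limit $\varinjlim(A_k,\phi_k)$ of homogeneous building blocks $A_k=C(X_k)\otimes M_{n_k}$ in the style of Villadsen's second construction. I would take the bases $X_k$ to be high-dimensional --- typically products of $2$-spheres $X_k=(S^2)^{d_k}$ with $d_k\to\infty$ --- so that the Pedersen--Petersen style obstruction (as recalled and generalised earlier in Section~\ref{sec:example}) to approximating elements of $A_0$ by a bounded number of self-commutators is available at every stage. The connecting maps would have the Villadsen-II form
\[
\phi_k(f)(x) = \mathrm{diag}\bigl(f\circ\pi_1(x),\ldots,f\circ\pi_{r_k}(x),\ f(p_1^{(k)}),\ldots,f(p_{s_k}^{(k)})\bigr),
\]
where $\pi_i\colon X_{k+1}\to X_k$ are coordinate projections and $p_j^{(k)}\in X_k$ are points; the integers $r_k,s_k,n_k$ together with the distribution of $\{p_j^{(k)}\}_{k,j}$ would be tuned so that the $p_j^{(k)}$ accumulate densely in each $X_\ell$ (yielding simplicity of the limit) and the measures induced by the point-evaluation summands converge, in a suitable ratio against the coordinate summands, to a unique tracial state.

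For each $m\in\N$, I would then fix a stage $k_m$ large enough that the generalised Pedersen--Petersen construction on $A_{k_m}$ supplies a selfadjoint contraction $b_m\in (A_{k_m})_0$ satisfying $\|b_m - \sum_{i=1}^m[x_i^*,x_i]\|\geq 1$ for every $x_1,\ldots,x_m\in A_{k_m}$, the obstruction being detected by a non-vanishing characteristic class of an auxiliary vector bundle built from $b_m$. Setting $a_m:=\phi_{\infty,k_m}(b_m)\in A_0$ (a contraction), the required lower bound would follow by contradiction: a hypothetical approximation $\|a_m-\sum_{i=1}^m[x_i^*,x_i]\|<1$ in $A$ descends, via a standard finite-stage approximation argument, to a similar strict approximation of $\phi_{\ell,k_m}(b_m)$ in some $A_\ell$ with $\ell\geq k_m$, contradicting the persistence of the Pedersen--Petersen obstruction at that stage.

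The main obstacle, and the delicate part of the construction, is verifying precisely that the obstruction survives the connecting maps. Since the $\pi_i$ are coordinate projections of products of spheres, they pull back non-vanishing Chern/Euler classes on $X_{k_m}$ to non-vanishing classes on each later $X_\ell$, so the Villadsen-II ``coordinate'' summands of $\phi_{\ell,k_m}(b_m)$ still carry the cohomological obstruction; the point-evaluation summands, however, contribute finite-dimensional matrix blocks on which arbitrary commutators are available and which could in principle be used to cancel the obstruction. The key technical input is a quantitative form of the Pedersen--Petersen bound, tracking how the approximation defect scales with the rank of the point-evaluation part, and then choosing the multiplicities $r_k,s_k,n_k$ so that the cohomological contribution is not drowned in the point-evaluation block. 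Simultaneously arranging this with simplicity (which forces $s_k$ large and $\{p_j^{(k)}\}$ dense) and with a unique tracial state is the central balancing act of the proof.
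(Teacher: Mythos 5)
Your overall strategy (AH limit over products of $2$-spheres, a Pedersen--Petersen-type obstruction at a finite stage, persistence of the obstruction along the connecting maps) matches the paper's, but the specific construction you propose does not work, and the difficulty you flag at the end as ``the central balancing act'' is in fact a genuine obstruction to your framework rather than a technical point to be tuned. With building blocks $C(X_k)\otimes M_{n_k}$ and connecting maps whose point-evaluation summands land in \emph{trivial} matrix blocks, the obstruction of Proposition \ref{prop:obstruction} requires $[1_{X_\ell}]\nleq n_\ell m[q]$ for the complementary corner $q$ of the ambient unit at every stage $\ell$; this is an all-or-nothing condition (once $[1_{X_\ell}]\leq n_\ell m[q]$ the relevant sections can be made full and, by Fack, each traceless matrix at a point evaluation is a single self-commutator), so there is no ``quantitative form of the bound scaling with the rank of the point-evaluation part'' to exploit. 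On the other hand, to get a unique tracial state in such a system the point evaluations must asymptotically exhaust the multiplicity, which forces $\dim X_\ell/n_\ell\to 0$; since the Euler-class obstruction needs roughly $n_\ell m$ to stay below $\dim X_\ell$, the obstruction is destroyed at later stages. Simplicity and unique trace on one side, and persistence of the obstruction on the other, pull in opposite directions and cannot be balanced within the Villadsen-I-type frame you set up.

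The paper escapes this by a different choice of building blocks and connecting maps: the blocks are hereditary subalgebras $\her(p_n)\subseteq C(Y_n,\mathcal K)$ where $p_n$ is a direct sum of copies of rank-one tensor-power Bott projections $P_i=P^{\otimes k_i}$ over fresh factors $X_i=(S^2)^{k_i}$, and the connecting map is $f\mapsto f\oplus (f(c_n)\otimes P_{n+1}(x_{n+1}))$. All multiplicity growth comes from point evaluations, which yields simplicity and a unique trace exactly as in Villadsen's second construction; but each point evaluation is \emph{tensored with a new rank-one Bott-type projection} supported on a new, arbitrarily high-dimensional factor. Consequently the corner complementary to $p_m$ at stage $m+n$ is a sum of line bundles pulled back from distinct factors, its $mM_m$-fold Whitney sums keep nonzero Euler class provided $k_{m+1}\geq mM_ml_m$, and $[1_{Y_{m+n}}]\nleq mM_m[q_{m,n}]$ holds at every stage. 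This twisting of the point evaluations by new Bott projections is the essential idea missing from your proposal.
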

This theorem answers \cite[Question 2.3]{tristan-farah}.

It was demonstrated in \cite{marcoux} that the property of strict comparison of projections could be used to express elements of $A_0$, with $A$ of real rank zero, as sums of a small number of commutators. Strict comparison of positive elements and a unique tracial state were exploited in \cite{ng2} for the same purpose. 
Since such regularity properties continue to  exist beyond nuclear C*-algebras, they provide a path for extending the previous results  without assuming nuclearity. Almost divisibility in the Cuntz semigroup is another key property, satisfied for example by all $\mathcal Z$-stable C*-algebras. In \cite{winter}, Winter calls ``pure C*-algebra" one with both strict comparison of positive elements and almost unperforated Cuntz semigroup. In the last section of this paper we prove the following theorem:

\begin{theorem}\label{beyond}
Let $A$ be a separable simple, unital, pure C*-algebra of stable rank one. Assume also that bounded 2-quasitraces on $A$ are traces (e.g., $A$ is exact). Then each element of $A_0$ is the sum of three commutators.
\end{theorem}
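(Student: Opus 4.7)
The plan is to follow the strategy that gives three commutators for $\mathcal Z$-stable C*-algebras of finite nuclear dimension in the corollary to Theorem \ref{thm:nuclearcommutators} (which goes through \cite[Theorem 3.2]{ng2}), substituting for the missing $\mathcal Z$-embedding the structural content of pureness and stable rank one. The case $A=\C$ is trivial; otherwise, $A$ is infinite-dimensional and simple, hence has no finite-dimensional representations, so a Pop/Ng-style argument shows already that every element of $A_0$ is some finite sum of self-commutators, and the task is to push the count down to three.

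The matricial ingredients needed for the reduction would be produced as follows. For each $n\in\N$, using almost divisibility of $\Cu(A)$ (a consequence of pureness) together with strict comparison in the form ``$d_\tau(x)<d_\tau(y)$ for all $\tau\in T(A)$ implies $x\cuntzle y$'' (valid because bounded $2$-quasitraces are traces), construct pairwise orthogonal, pairwise Cuntz equivalent positive contractions $e_1,\dots,e_n\in A$ with $1_A-\sum_i e_i$ of arbitrarily small Cuntz class. Stable rank one then upgrades each Cuntz equivalence $e_i\cuntzeq e_j$ to the existence of $v_{ij}\in A$ with $v_{ij}^*v_{ij}$ and $v_{ij}v_{ij}^*$ arbitrarily close in norm to $e_j$ and $e_i$ respectively, yielding a system of approximate matrix units $\{v_{ij}\}$ which plays the role of the matrix units inherited from $\mathcal Z$ in Ng's argument.

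With these approximate matrix units in hand, I would run the combinatorial core of \cite[Theorem 3.2]{ng2}: starting from a finite decomposition $a=\sum_{k=1}^{N}[x_k^*,x_k]\in A_0$, compress and redistribute the $x_k$ against the $v_{ij}$ so that the ``off-diagonal'' mass collapses into two self-commutators via a Pearcy--Topping style identity applied to block-form elements, while the ``diagonal'' mass, whose tracial obstruction vanishes because $a\in A_0$, is absorbed into a third self-commutator. The combinatorics does not use $\mathcal Z$-stability beyond the matricial decomposition we have just produced by hand.

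The principal obstacle is the passage from \emph{approximate} matricial data to an \emph{exact} commutator identity, since the statement demands equality rather than a norm approximation. The residual errors coming from $v_{ij}^*v_{ij}\neq e_j$ and $1_A\neq \sum_i e_i$ lie in $A_0$ and are Cuntz-supported in a hereditary subalgebra that shrinks in Cuntz class as $n$ grows; iterating the same matricial construction inside that subalgebra, and then reshuffling a second time, should permit this error to be absorbed into the three commutators already constructed. It is precisely at this final absorption step that pureness (for divisibility of the small residue) and stable rank one (to realise Cuntz equivalences by honest elements inside the small hereditary subalgebra) do the essential work.
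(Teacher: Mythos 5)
Your proposal diverges from the paper's proof, and it has two genuine gaps. The paper does not manufacture matricial structure inside $A$ by hand; instead it computes $\Cu(A)\cong \mathrm V(A)\sqcup\mathrm{SAff}(\mathrm T(A))$ from pureness, invokes Elliott's range result to produce a simple unital inductive limit $B$ of $1$-dimensional NCCW complexes with trivial $K_1$ and the same Cuntz semigroup, embeds $B$ unitally into $A$ by the classification theorem of \cite{nccw}, moves $a$ into $B$ up to approximate unitary equivalence (this is where stable rank one is used, via the classification of homomorphisms from $C_0(0,1]\oplus C_0(0,1]$), and only then applies Theorem \ref{thm:nuclearcommutators} to $B$ (which has nuclear dimension at most $1$) followed by \cite[Theorem 3.2]{ng2} with the unital copy of $\mathcal Z$ sitting inside $B$.

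The first gap is your assertion that ``a Pop/Ng-style argument shows already that every element of $A_0$ is some finite sum of self-commutators.'' Nothing in the hypotheses supports this: $A$ is not assumed nuclear, let alone of finite nuclear dimension, and Pop's result concerns algebras without bounded traces. The Cuntz--Pedersen theorem only yields an infinite norm-convergent sum, whose partial sums involve an unbounded number of commutators with no control on $\sum_k\|x_k\|^2$; every mechanism in the paper for converting this into a finite or boundedly-approximating decomposition (Theorem \ref{thm:nuclearcommutators}, Propositions \ref{prop:fack} and \ref{prop:orthoseq}) runs through finite nuclear dimension. Producing such a decomposition is precisely the problem the subalgebra $B$ is introduced to solve, so your argument is missing its starting point. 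The second gap is the final ``absorption'' step. Your construction of approximate matrix units from almost divisibility, strict comparison and stable rank one is plausible (it essentially produces unital copies of dimension drop algebras $\mathcal Z_{n,n+1}$), but Ng's reduction to three commutators uses honest matrix units and, crucially, approximating sums with uniformly bounded $\sum_i\|x_i\|\cdot\|y_i\|$; with only approximate relations, each correction stage creates a new residue and new commutators, and keeping the count at three would require an orthogonalized telescoping argument in the style of Proposition \ref{prop:fack} that you do not supply and for which the needed sequence of orthogonal ``large'' elements is not constructed. As written, the proposal does not constitute a proof.
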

The previous theorem allows us to deal with the case of separable simple unital exact $\mathcal Z$-stable C*-algebra completely: R\o rdam's dichotomy \cite[Theorem 6.7]{rordam} says that a simple unital $\mathcal Z$-stable C*-algebra is either purely infinite or of stable rank one. In the purely infinite case it is known that two commutators suffice \cite{pop}. On the other hand, the previous theorem is applicable in the stable rank one case. Indeed, $\mathcal Z$-stability implies pureness (by \cite{rordam}) and a well known theorem of Haagerup says that all bounded 2-quasitraces on an exact C*-algebra are traces.

\section{Preliminaries}\label{prelims}
Let $A$ be a C*-algebra.
A commutator of $A$ is an element of the form $[x,y]:=xy-yx$ for some $x,y\in A$. We call self-commutator a commutator of the form $[z^*,z]$. By \cite[Theorem 2.4]{marcoux}, if a selfadjoint element is a sum of $m$ commutators, then it is a sum of $2m$ self-commutators.

We will make use of the arithmetic of Cuntz equivalence classes of positive elements. Let us recall it briefly (see \cite{ara-perera-toms} for more). Let $A_+$ denote the set of positive elements of $A$. Given $a,b\in A_+$, $a$ is said to be Cuntz smaller than $b$ if there exist $d_n\in A$ such that $d_n^*bd_n\to b$; $a$ is Cuntz equivalent to $b$ if $a$ is Cuntz smaller than $b$ and $b$ is Cuntz smaller than $a$. These relations we will be denoted by $a\precsim b$ and $a\sim b$ respectively.
We will make use of the following reformulation of Cuntz comparison: $a\precsim b$ if and only if for each $\epsilon>0$ there exists $x\in A$ such that $(a-\epsilon)_+=x^*x$ and $xx^*\in \mathrm{her}(b)$. Here and throughout the paper, $\her(b)$ denotes the hereditary subalgebra generated by $b$, i.e., $\overline{bAb}$ and $(t-\epsilon)_+=\max(t-\epsilon,0)$ for $t\geq 0$.

The Cuntz semigroup of $A$, denoted by $\Cu(A)$, is defined as the set of Cuntz equivalence classes of positive elements in $A\otimes \mathcal K$ (where $\mathcal K$ is the C*-algebra of compact operators on a separable Hilbert space). Given a positive element $a\in A\otimes \mathcal K$ we will denote its Cuntz equivalence class by $[a]$. The order on $\Cu(A)$ is defined as $[a]\leq [b]$
if $a\precsim b$ and addition is defined as $[a]+[b]=[a'+b']$, where $a\sim a'$, $b\sim b'$ and $a'\perp b'$ (it is always possible to find such $a',b'$ in $A\otimes \mathcal K$).

Let us now recall the definition of nuclear dimension, as introduced by Winter and Zacharias in \cite{winter-zacharias}. We start by defining order zero maps:
A completely positive contractive (c.p.c.) map between C*-algebras $\phi\colon A\to B$ is said to have order zero if it preserves orthogonality, i.e., $ab=0$ implies $\phi(a)\phi(b)=0$ for all $a,b\in A$. By \cite[Corollary 3.1]{winter-zacharias0}, in this case there exists a homomorphism  $\pi^\phi\colon A\otimes C_0(0,1]\to B$ such that $\pi^\phi(a)=\pi(a\otimes t)$, where $t\in C_0(0,1]$ is the identity function on $(0,1]$. Observe, for later use,  that this implies that $\phi([x^* ,x])=[y^*,y]$, with $y=\pi^\phi(x\otimes t^{1/2})$, and that $\phi$ maps $A_0$ to $B_0$.

The C*-algebra $A$ is said to have nuclear dimension at most $m\in \N$ if for each $k=0,1,\dots,m$ there exist nets of c.p.c. maps $\psi^k_\lambda\colon A\to C^k_\lambda$ and $\phi^k_\lambda\colon C^k_\lambda\to A$, where $C^k_\lambda$ is a finite dimensional C*-algebra and $\phi^k_\lambda$ has order zero for all $\lambda\in \Lambda$, such that 
\[
\sum_{k=0}^m \phi^k_\lambda\psi^k_\lambda(a)\to_{\lambda} a
\]
for all $a\in A$. It is a useful  fact that the maps $\psi^k_\lambda$ may be chosen to be asymptotically of order zero. That is, such that $\psi^k\colon A\to \prod_\lambda C^k_\lambda/\bigoplus_\lambda C^k_\lambda$ is an order zero map. We then have that
\begin{align}\label{factorization}
\xymatrix{
A\ar@{^{(}->}[rr]^{\iota=\sum_{k=0}^m\phi^k\psi^k}\ar[dr]_{\psi^k} & & A_\infty\\
& N^k\ar[ur]_{\phi^k}&
}
\end{align}
where $\iota$ is the diagonal inclusion, $N^k=\prod_\lambda C^k_\lambda/\bigoplus_\lambda C^k_\lambda$, $A_\infty=\prod_{\lambda} A/\bigoplus_\lambda A$, and the maps
$\phi^k\colon N^k\to A_\infty$ are defined entrywise using the maps $\phi^k_\lambda$, $\lambda\in \Lambda$.

\section{Proof of Theorem \ref{thm:nuclearcommutators}}\label{nuclearcommutators}
The following lemma is well known:
\begin{lemma}\label{lem:onecommutator}
Let  $A$ be either a  finite dimensional C*-algebra, a product $\prod_{i=1}^\infty A_i$, or the quotient $\prod_{i=1}^\infty A_i/\bigoplus_{i=1}^\infty A_i$, where each $A_i$ is a finite dimensional C*-algebra. If $a\in A_0$ then there exists $x\in A$
such that $a=[x^*,x]$ and $\|x\|^2\leq 2\|a\|$.  
\end{lemma}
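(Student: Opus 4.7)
The strategy is to reduce everything to the case $A = M_n$ via an explicit weighted-shift construction, then bootstrap to the product and quotient cases. I would first diagonalize $a$, writing it as $\mathrm{diag}(\mu_1, \ldots, \mu_n)$ with $|\mu_j| \leq \|a\|$ and $\sum_j \mu_j = 0$. Looking for $x$ of the form $\sum_{k=1}^{n-1} c_k e_{k+1, k}$, a direct computation gives $[x^*, x] = \mathrm{diag}(|c_1|^2, |c_2|^2 - |c_1|^2, \ldots, -|c_{n-1}|^2)$, so matching entries forces $|c_k|^2$ to equal the $k$-th partial sum of the eigenvalues. This only makes sense if every partial sum of $(\mu_j)$ is nonnegative, and to get $\|x\|^2 \leq 2\|a\|$ the partial sums must all lie in $[0, 2\|a\|]$. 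The technical heart is therefore a combinatorial claim that the $\mu_j$ can be permuted to achieve this. I would prove it greedily: at each step pick any remaining eigenvalue that keeps the running sum $s_k$ in $[0, 2\|a\|]$; a short case analysis using $|\mu_j| \leq \|a\|$ and the fact that the sum of the as-yet-unused eigenvalues equals $-s_k$ shows such a choice always exists.

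The general finite-dimensional and product cases follow routinely: decompose into matrix summands, apply the above component-by-component, and note that each component is trace-zero on the relevant summand because each such trace extends to a bounded trace on the ambient algebra. In the product case the uniform bound $\|x_i\|^2 \leq 2\|a_i\| \leq 2\|a\|$ collects into $\|x\|^2 \leq 2\|a\|$.

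For the quotient $A = \prod_i A_i / \bigoplus_i A_i$ I would first lift $a$ to a selfadjoint $\tilde a = (\tilde a_i) \in \prod_i A_i$ with $\|\tilde a_i\| \leq \|a\|$ (truncating coordinates via functional calculus if needed), and then correct each $\tilde a_i$ to a trace-zero element by subtracting the central $y_i \in A_i$ whose coefficient on each matrix summand is the normalized trace of the corresponding block of $\tilde a_i$; then $\tilde a_i - y_i \in (A_i)_0$. The key claim is $\|y_i\| \to 0$: every bounded trace on $A$ arises as an ultrafilter limit $\lim_\omega \tau_i(\tilde a_i)$ for some choice of tracial states $\tau_i$ on $A_i$, so $a \in A_0$ forces all such limits to vanish, and varying the choices forces $\max_j |\tau_{ij}(\tilde a_i)| \to 0$. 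Applying the product case to $(\tilde a_i - y_i)_i$ produces $\tilde x_i$ with $\|\tilde x_i\|^2 \leq 2(\|a\| + \|y_i\|)$. The image $x \in A$ of $(\tilde x_i)$ satisfies $[x^*, x] = a$ (the defect $(y_i)$ lies in $\bigoplus_i A_i$) and, since the quotient norm equals $\limsup_i \|\cdot\|$,
\[
\|x\|^2 = \limsup_i \|\tilde x_i\|^2 \leq \limsup_i 2(\|a\| + \|y_i\|) = 2\|a\|.
\]

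The main obstacle is the greedy rearrangement in the matrix case; the remainder is bookkeeping, with the mildly delicate point in the quotient that the $\limsup$ nature of the quotient norm exactly absorbs the $\|y_i\| \to 0$ slack and yields the sharp bound $2\|a\|$ rather than $2\|a\| + \epsilon$.
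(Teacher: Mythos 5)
Your proof is correct, but it takes a genuinely different route from the paper's in the two places where there is actual content. For the finite-dimensional case the paper simply cites Fack's Lemma~3.5; you reprove it via the weighted shift $x=\sum_k c_k e_{k+1,k}$ and the greedy rearrangement keeping the partial sums in $[0,2\|a\|]$, which is essentially Fack's argument and is sound (your case analysis does close: if $s_k\geq\|a\|$ the remaining entries sum to $-s_k<0$ so a nonpositive one is available and keeps $s_{k+1}\geq 0$; if $s_k<\|a\|$ any remaining entry keeps $s_{k+1}<2\|a\|$, and if only nonpositive entries remain the sums decrease monotonically to $0$). The real divergence is in the quotient case: the paper invokes the Cuntz--Pedersen theorem to write $a=\sum_n[x_n^*,x_n]$, truncates the series, applies the finite-dimensional case to the tails coordinatewise, and finishes with a diagonal sequence argument; you instead lift $a$, subtract the central ``blockwise trace'' corrections $y_i$, and show $\|y_i\|\to 0$ directly. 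Your approach avoids Cuntz--Pedersen entirely and produces the exact identity $a=[x^*,x]$ with the sharp bound in one pass, at the cost of the trace analysis on $\prod A_i/\bigoplus A_i$; the paper's approach is shorter given Cuntz--Pedersen but hides the bound $\|x\|^2\leq 2\|a\|$ inside the diagonal argument. One small imprecision worth fixing: you assert that \emph{every} bounded trace on the quotient arises as an ultrafilter limit $\lim_\omega\tau_i(\cdot)$, which is false (the tracial simplex of the corona is much larger), but your argument only uses the true converse direction --- that each such ultrafilter limit \emph{is} a bounded trace on the quotient, so that $a\in A_0$ forces all of them to vanish and hence $\max_j|\tau_{ij}(\tilde a_i)|\to 0$ by choosing $\omega$ to concentrate on a putative bad subsequence. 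With that sentence rephrased, the proof is complete.
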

\begin{proof}
If $A$ is finite dimensional, this is \cite[Lemma 3.5]{fack}. The result for $\prod_{i=1}^\infty A_i$ follows at once applying it to each entry. Finally, let  $A=\prod_{i=1}^\infty A_i/\bigoplus_{i=1}^\infty A_i$ and let $a\in A_0$. Then $a=\sum_{n=1}^\infty [x_n^*,x_n]$.
Let us choose lifts $(a_i)_{i=1}^\infty,(x_{n,i})_{i=1}^\infty\in \prod_{i=1}^\infty A_i$.
For each $\epsilon>0$, let $N\in \N$ be such that $\|a-\sum_{n=1}^N [x_n^*,x_n]\|<\epsilon$. Then $\|a_i-\sum_{n=1}^N [x_{n,i}^*,x_{n,i}]\|<\epsilon$ for all sufficiently large $i$.
Thus, for all such $i$, we find $y_{i}$ such that $\|a_i-[y_{i}^*,y_i]\|<\epsilon$, with $\|y_i\|^2\leq 2(\|a_i\|+\epsilon)^2$. The result follows letting $\epsilon=1,\frac 1 2,\frac 1 3, \dots$ and using a standard diagonal sequence argument.
\end{proof}

\begin{proof}[Proof of Theorem \ref{thm:nuclearcommutators}]
Let $a\in A_0$, and assume that $a=\sum_{i=1}^\infty [z_i^*,z_i]$. By \cite[Proposition 2.6]{winter-zacharias}, we may assume that $A$ is separable and $z_i\in A$ for all $i$.
Let $\psi^k$ and $\phi^k$ be maps as in \eqref{factorization}, with $k=0,1,\dots,m$ (and with $\Lambda=\N$). Then $\psi^k(a)\in (N^k)_0$, since $\psi^k$ is a c.p.c. map of order zero. By Lemma \ref{lem:onecommutator} applied to the C*-algebra $N^k$, $\psi^k(a)$ is expressible as a single self-commutator $[x_k^*,x_k]$, with $\|x_k\|^2\leq 2\|\psi^k(a)\|$. Since $\phi^k$ is also a c.p.c. map of order zero, each self-commutator $[x_k^*,x_k]$ gets mapped by $\phi^k$
to a self-commutator $[y_k^*,y_k]\in A_\infty$, with $y_k=\pi^{\phi^k}(x_k\otimes t^{1/2})$ (whence $\|y_k\|\leq \|x_k\|$). Thus,
\[
\iota(a)=\sum_{k=0}^{m} \phi^k\psi^k(a)=\sum_{k=0}^{m} [y_k^*,y_k],
\] 
from which the result clearly  follows.
\end{proof}

\begin{remark}\label{rem:nuclearcommutators}
If in Theorem \ref{thm:nuclearcommutators} we relax the requirement of using self-commutators, 
and simply seek to express $a\in A_0$ as a limit of  sums of $m+1$ commutators $[x,y]$, then we can arrange for all the commutators to satisfy $\|x\|\cdot \|y\|\leq  \|a\|$. Let us show this: By the method used to prove Theorem \ref{thm:nuclearcommutators}, this boils down to showing that for $a\in A_0$, with $A$ a matrix algebra, we can find $x,y\in A$ with $a=[x,y]$ and $\|x\|\cdot \|y\|\leq \|a\|$. To prove this we may  assume that $a$ is a diagonal matrix. Furthermore, conjugating by a suitable permutation matrix, we can arrange for the partial sums of the diagonal entries of $a$ to be in $[-\|a\|,\|a\|]$. We can then use a commutator formula similar to equation \eqref{simplecommutator} below.
\end{remark}

\section{Finitely many commutators}\label{finitelymany}
In this section we prove Theorem \ref{thm:fack}.
\begin{lemma}\label{lem:trapecio}
Let $a,b\in A_+$ be such that  
\begin{align}\label{LKineq}
L[a]\leq (L-1)[(a-\epsilon)_+]+K[b]
\end{align} 
for some $L,K\in \N$ and $\epsilon>0$. Then for each $x\in \mathrm{her}((a-\epsilon)_+)$ we have
\[
x=\sum_{k=1}^{L(L+K-1)}[x_k,y_k]+z,
\]
for some $z\in \mathrm{her}(b)$ such that $\|z\|\leq K\|x\|$ and $x_k,y_k\in A$ such that $\|x_k\|\cdot \|y_k\|\leq \|x\|$ for all $k$.
\end{lemma}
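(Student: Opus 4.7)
The plan is to realize the Cuntz subequivalence $L[a] \le (L-1)[(a-\epsilon)_+] + K[b]$ concretely via an $L \times (L-1+K)$ matrix of witnesses in $A$, then sum commutator identities across this matrix. Using the reformulation of Cuntz comparison recalled in Section \ref{prelims}, for each small $\delta > 0$ one produces elements $(w_{ij})_{1 \le i \le L,\, 1 \le j \le L-1+K}$ in $A$ such that $\sum_{j} w_{ij}^* w_{i'j} = \delta_{ii'}(a-\delta)_+$ (so the $L$ columns of the matrix are ``row-orthogonal'' with common square $(a-\delta)_+$), and each range $w_{ij}w_{ij}^*$ sits in one of $L-1+K$ pairwise orthogonal hereditary subalgebras --- $L-1$ copies of $(a-\epsilon)_+$-type for $j \le L-1$ and $K$ copies of $b$-type for $j > L-1$. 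By a functional calculus adjustment of $a$ (using $x \in \mathrm{her}((a-\epsilon)_+)$ and $\delta < \epsilon$), we arrange $(a-\delta)_+ \cdot x = x$, and normalize so each $w_{ij}$ is a contraction.

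The key algebraic input is the commutator identity
\[
[w_{ij}^*,\, w_{ij} x] \;=\; w_{ij}^* w_{ij} x \;-\; w_{ij} x w_{ij}^*,
\]
which gives one commutator per entry $(i,j)$ with $\|w_{ij}^*\|\cdot\|w_{ij} x\|\le \|x\|$. Summing over $j$ for each fixed row $i$ and invoking $(a-\delta)_+\cdot x = x$ produces
\[
x \;=\; \sum_{j=1}^{L-1+K} w_{ij} x w_{ij}^* \;+\; \sum_{j=1}^{L-1+K} [w_{ij}^*,\, w_{ij} x],
\]
writing $x$ as $L-1+K$ commutators plus a ``pigeonhole tail'' that decomposes into pieces in $\mathrm{her}((a-\epsilon)_+)$ (from $j\le L-1$) and pieces in $\mathrm{her}(b)$ (from $j>L-1$). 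Aggregating across all $L$ rows accounts for exactly $L(L+K-1)$ commutators in total.

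The final and hardest step is to combine the $L$ row identities so as to isolate a single copy of $x$ on the left (rather than $L$ copies, as naively summing would yield), together with a remainder $z \in \mathrm{her}(b)$ satisfying $\|z\| \le K\|x\|$. I expect this cancellation to exploit the cross-row orthogonality $\sum_j w_{ij}^* w_{i'j} = 0$ for $i \ne i'$ to offset the surplus $L-1$ copies of $x$ against the $(a-\epsilon)_+$-pigeonhole tails, leaving the $b$-pigeonhole tails as the remainder $z$ (the bound $\|z\|\le K\|x\|$ then following from the orthogonality of the $K$ copies of $\mathrm{her}(b)$). The lemma's name ``trapecio'' suggests organizing the bookkeeping via a trapezoidal bipartite diagram, with $L$ copies of $x$ ``on top'' paired through the matrix $(w_{ij})$ with $L-1+K$ pigeonholes ``on the bottom''; making this picture into a precise algebraic identity is the main obstacle. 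A secondary technicality is passing from the $(a-\delta)_+$-approximate identities to the exact statement by a limit as $\delta \to 0$.
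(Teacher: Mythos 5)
Your setup is the same as the paper's: realize the inequality \eqref{LKineq} by a matrix $V=(v_{i,j})$ of witnesses with $V^*V=g_{\epsilon/2}(a)\otimes 1_L$ and $VV^*$ supported in the $L-1$ orthogonal copies of $\her((a-\epsilon)_+)$ and the $K$ orthogonal copies of $\her(b)$, and then use the entrywise identity $[v^*,vy]=v^*vy-vyv^*$; the count $L(L+K-1)$ and the identification of the $b$-tail with $z$ are also right. But the step you explicitly defer --- isolating a single copy of $x$ instead of $L$ copies --- is the entire content of the lemma for $L>1$, and the mechanism you conjecture for it (cross-row orthogonality $\sum_j w_{ij}^*w_{i'j}=0$ for $i\neq i'$) is not what makes it work: the troublesome terms $w_{ij}xw_{ij}^*$ never pair entries from distinct rows, so that orthogonality has nothing to cancel against.

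The actual device is a Neumann-series absorption. The first $L-1$ diagonal blocks of $VV^*$ satisfy $\sum_{i=1}^{L-1}\sum_{j=1}^{L}v_{i,j}v_{i,j}^*\leq (L-1)\,g_{\epsilon/2}(a)$, so the completely positive map $\Phi(y)=\frac1L\sum_{i=1}^{L-1}\sum_{j=1}^{L}v_{i,j}yv_{i,j}^*$ on $\her((a-\epsilon)_+)$ has norm at most $\frac{L-1}{L}<1$. Hence $\mathrm{Id}-\Phi$ is invertible with $\|(\mathrm{Id}-\Phi)^{-1}\|\leq L$, and one applies the commutator identity not to $x$ but to $y:=(\mathrm{Id}-\Phi)^{-1}(x)$: writing $x=y-\Phi(y)$ and using $\frac1L\sum_{i,j}v_{i,j}^*v_{i,j}\,y=g_{\epsilon/2}(a)y=y$ gives $x=\sum_{i,j}\frac1L[v_{i,j}^*,v_{i,j}y]+z$ with $z=\frac1L\sum_{i\geq L}\sum_j v_{i,j}yv_{i,j}^*\in\her(b)$, exactly, with no $\delta\to0$ limit. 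Note also that the $\frac1L$ normalization is essential to your norm claim: since $\|y\|\leq L\|x\|$, it is $\frac1L\|v_{i,j}^*\|\cdot\|v_{i,j}y\|\leq\|x\|$ and $\|z\|\leq K\|x\|$ that survive, whereas your unnormalized bound $\|w_{ij}^*\|\cdot\|w_{ij}x\|\leq\|x\|$ would fail once $x$ is replaced by $y$. (For $L=1$, the only case the paper later uses, $\Phi=0$ and your row identity already is the proof; but the lemma as stated requires general $L$.)
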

\begin{proof}
We may assume without loss of generality that $a$ and $b$ are contractions. 
Let us set 
\[
c:=(a-\epsilon)_+\otimes 1_{L-1})\oplus (b\otimes 1_K)\in M_{L+K-1}(A).
\]
The relation \eqref{LKineq} implies that there exists an $L\times (L+K-1)$ matrix $V$ with entries in $A$ such that $g_{\epsilon/2}(a)\otimes 1_L=V^*V$ and 
$VV^*\in \her(c)$.
Adding over the main diagonal of $V^*V$ we get 
\[
L\cdot g_{\epsilon/2}(a)=\sum_{j=1}^L\sum_{i=1}^{L+K-1} v_{i,j}^*v_{i,j}.
\]
On the other hand, using that  $VV^*\in\mathrm{her}(b)$, and adding over the first $L-1$ diagonal terms of $VV^*$ we get that 
\begin{align}\label{ineq}
\sum_{i=1}^{L-1}\sum_{j=1}^L  v_{i,j}v_{i,j}^*\leq (L-1) \cdot g_{\epsilon/2}(a).
\end{align}
Finally, looking at  the remaining $K$ diagonal terms of $VV^*$ we get $v_{i,j}v_{i,j}^*\in\her(b)$ for all $i\geq L$ and all $j$.

Let $\Phi\colon \her((a-\epsilon)_+)\to \her((a-\epsilon)_+)$ be defined by
\[
\Phi(x)=\frac{1}{L}\sum_{i=1}^{L-1}\sum_{j=1}^L v_{i,j}yv_{i,j}^*.
\]
From \eqref{ineq} we deduce  that $\|\Phi\|\leq \frac{L-1}{L}$. Thus, $\mathrm{Id}-\Phi$ is invertible with $\|(1-\Phi)^{-1}\|\leq L$. Let
$y\in \her((a-\epsilon)_+)$ be such that $x=y-\Phi(y)$, with $\|y\|\leq L\|x\|$. Notice that $g_{\epsilon/2}(a)y=y$. Then
\begin{align*}
x &=y-\Phi(y)\\
&=g_{\epsilon/2}(a)y-\frac{1}{L}\sum_{i=1}^{L-1}\sum_{j=1}^L v_{i,j}yv_{i,j}^*\\
 &=\frac{1}{L}\sum_{i=1}^{L+K-1}\sum_{j=1}^L v_{i,j}^*v_{i,j}y-\frac{1}{L}\sum_{i=1}^{L-1}\sum_{j=1}^L v_{i,j}yv_{i,j}^*\\
&=\sum_{i=1}^{L+K-1}\sum_{j=1}^L \frac{1}{L}[v_{i,j}^*,v_{i,j}y]+z,
\end{align*}
with 
\[
z=\frac{1}{L}\sum_{i=L}^{L+K-1}\sum_{j=1}^Lv_{i,j}yv_{i,j}^*\in \mathrm{her}(b).\] 
Observe that there are $L(L+K-1)$ commutators $\frac{1}{L}[v_{i,j}^*,v_{i,j}y]$, that  $\frac{1}{L}\|v_{i,j}^*\|\cdot \|v_{i,j}y\|\leq \|x\|$ for all $i,j$, and that
\[
\|z\|\leq \Big \|\sum_{i=L}^{L+K-1}\sum_{j=1}^Lv_{i,j}v_{i,j}^*\Big\|\|x\|\leq K\|x\|,
\]
as desired.
\end{proof}

The following proposition is an adaptation of Fack's method from \cite{fack} that uses positive elements instead of projections. (We will only use it below with $L=1$.)
\begin{proposition}\label{prop:fack}
Let $A$ be a  C*-algebra. Suppose that 
\begin{enumerate}
\item
there exists a sequence of  positive elements $(e_i)_{i=0}^\infty$ in $A$ and $\epsilon_i>0$ such that   $e_i\perp  e_j$ for all $i,j\geq 1$,  
\begin{align}\label{trapecio}
L[e_i]\leq (L-1)[(e_i-\epsilon_i)_+)]+K[(e_{i+1}-\epsilon_{i+1})_+]
\end{align}
for   all $i\geq 0$, and
\item
there exist  $M,\overline M\geq 1$ such that for each hereditary subalgebra  $B\subseteq A$  if  $b\in B_0$ then $b$ is limit of elements of the form  $\sum_{i=1}^M [x_i,y_i]$, with
$x_i,y_i\in B$  and  $\|x_i\|\cdot \|y_i\|\leq \overline M\|b\|$ for all $i=1,2,\dots,M$.
\end{enumerate}
Then each element  $z\in \her((e_0-\epsilon_0)_+)\cap A_0$ is the sum of $L(L+K-1)+\max(M,L(L+K-1))$ commutators $[x,y]$, with  $\|x\|\cdot \|y\|\leq K\overline M\|z\|$.
\end{proposition}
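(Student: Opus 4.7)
The plan is a two-stage telescoping argument in the spirit of Fack's method. First I would apply Lemma~\ref{lem:trapecio} once with $a=e_0$ and $b=(e_1-\epsilon_1)_+$ to the input $z$: this writes $z=w_0+z_1$ where $w_0$ is a sum of $L(L+K-1)$ commutators $[x,y]$ with $\|x\|\|y\|\leq \|z\|$, and $z_1\in\her((e_1-\epsilon_1)_+)$. The proof of Lemma~\ref{lem:trapecio} uses the inverse of a positive linear contraction, which preserves selfadjointness, so $z_1$ is selfadjoint; it is also trace-zero on every bounded trace, so $z_1\in A_0$ with $\|z_1\|\leq K\|z\|$. This accounts for the first $L(L+K-1)$ commutators, and the remaining task is to express any such $z_1$ exactly as $N':=\max(M,L(L+K-1))$ commutators of bilinear norm at most $\overline M\|z_1\|\leq K\overline M\|z\|$.

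To do this I would iterate inside the orthogonal family $(e_i)_{i\geq 1}$. Pick a rapidly decreasing summable sequence $\delta_n>0$ and recursively define $z_n\in A_0\cap \her((e_n-\epsilon_n)_+)$: invoke hypothesis~(ii) on $\her((e_n-\epsilon_n)_+)$ to approximate $z_n$ within $\delta_n$ by a sum $c_n^{(a)}=\sum_{k=1}^M[a_k^{(n)},b_k^{(n)}]$ with $a_k^{(n)},b_k^{(n)}\in \her(e_n)$ and $\|a_k^{(n)}\|\|b_k^{(n)}\|\leq \overline M\|z_n\|$; then apply Lemma~\ref{lem:trapecio} to the residue $z_n-c_n^{(a)}$ (passing to its selfadjoint part if necessary, whose norm is still at most $\delta_n$) to obtain $c_n^{(b)}=\sum_{l=1}^{L(L+K-1)}[x_l^{(n)},y_l^{(n)}]$ with bilinear norms at most $\delta_n$ and supports in $\her(e_n)+\her(e_{n+1})$, together with a residue $z_{n+1}\in A_0\cap\her((e_{n+1}-\epsilon_{n+1})_+)$ of norm at most $K\delta_n$. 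Telescoping, $z_1=\sum_{n\geq 1}\bigl(c_n^{(a)}+c_n^{(b)}\bigr)$ converges in norm.

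The decisive step is folding this infinite series of commutators into $N'$ honest commutators of $A$, using the pairwise orthogonality of $(e_i)_{i\geq 1}$. For each fixed $k$ the $a_k^{(n)}$'s and $b_k^{(n)}$'s live in mutually orthogonal $\her(e_n)$'s, so after rescaling to balance $\|a_k^{(n)}\|=\|b_k^{(n)}\|$ and force both sides to tend to zero, the sums $A_k:=\sum_n a_k^{(n)}$ and $B_k:=\sum_n b_k^{(n)}$ converge in norm in $A$ with all cross terms vanishing, yielding $[A_k,B_k]=\sum_n[a_k^{(n)},b_k^{(n)}]$ and so $M$ global commutators. An analogous bi-orthogonality holds for the Lemma~\ref{lem:trapecio} outputs: the matrix entries $v_{i,j}^{(n)}$ from its proof have right-supports in $\her(e_n)$ and left-supports in $\her(e_n)\cup\her(e_{n+1})$, all pairwise orthogonal across iterations, and the same rescaling collapses them into $L(L+K-1)$ global commutators. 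To pass from $M+L(L+K-1)$ down to $\max(M,L(L+K-1))$, pair each Lemma~\ref{lem:trapecio} commutator at iteration $n$ with a hypothesis-(ii) commutator at iteration $n+2$: the former is supported in $\her(e_n)+\her(e_{n+1})$, the latter in $\her(e_{n+2})$, so the identity $[X+A,Y+B]=[X,Y]+[A,B]$ valid under orthogonality merges them into a single commutator, consuming $\min(M,L(L+K-1))$ of each and leaving $\max(M,L(L+K-1))$ commutators in total. The main obstacle is precisely this collapse bookkeeping: verifying the bi-orthogonality of the $v_{i,j}^{(n)}$'s by tracking the construction in Lemma~\ref{lem:trapecio}, calibrating the rescalings and the sequence $(\delta_n)$ so that every formal sum genuinely converges in $A$ (rather than only in its multiplier algebra) and every bilinear norm stays within $K\overline M\|z\|$, and orchestrating the pairing consistently through the iteration including the boundary cases of the first few steps.
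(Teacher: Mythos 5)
Your decomposition is exactly the paper's: one application of Lemma~\ref{lem:trapecio} to push $z$ into $\her((e_1-\epsilon_1)_+)$, then an alternating iteration of hypothesis~(ii) and Lemma~\ref{lem:trapecio} down the orthogonal chain with errors $\delta_n\to 0$, followed by a collapse of the resulting infinite series of commutators. The iteration itself is fine. The genuine gap is in the collapse, and it is not merely bookkeeping to be calibrated. The elements produced by Lemma~\ref{lem:trapecio} at iteration $n$ are of the form $v^*$ and $vy$ with $v^*v\in\her(e_n)$ and $vv^*\in\her((e_{n+1}-\epsilon_{n+1})_+)$, while those at iteration $n+1$ are $w^*$, $wy'$ with $w^*w\in\her(e_{n+1})$; products such as $wv$ therefore need not vanish, so your assertion that the $v_{i,j}^{(n)}$ are ``all pairwise orthogonal across iterations'' is false, and $[\sum_n c_n,\sum_n d_n]=\sum_n[c_n,d_n]$ cannot be justified by orthogonality of supports alone. (One can instead check directly that the cross terms $c_nd_{n'}$ and $d_{n'}c_n$ vanish for $n\neq n'$ because of the directional structure of the $v$'s, or one can do what the paper does: split the Lemma~\ref{lem:trapecio} outputs into the subfamilies $S_2,S_4,\dots$ and $S_3,S_5,\dots$, within each of which the supports $\her(e_{j-1})+\her(e_j)$ are genuinely pairwise disjoint.)

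The second failure is your device for passing from $M+L(L+K-1)$ down to $\max(M,L(L+K-1))$. You propose to put the iteration-$n$ Lemma commutators and the iteration-$(n+2)$ hypothesis-(ii) commutators into a common family; but that family, summed over $n$, also contains the iteration-$(n+1)$ Lemma commutator, which is supported in $\her(e_{n+1})+\her(e_{n+2})$ and therefore collides with the iteration-$(n+2)$ hypothesis-(ii) commutator sitting in $\her(e_{n+2})$. So the merged expressions $[X+A,Y+B]=[X,Y]+[A,B]$ are not available for the whole family, only for isolated pairs, and no global commutator results. The paper's grouping is different: it keeps $S_1$ apart, sets $v=S_3+S_5+\cdots$ (which does collapse to $L(L+K-1)$ commutators by disjointness of supports), and interleaves the remaining terms as $u=(T_1+S_2)+(T_2+S_4)+\cdots$, pairing each block of hypothesis-(ii) commutators with a Lemma block rather than spacing them two iterations apart. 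Any correct write-up must track, for each of the four elements $v^*$, $vy$, $\tilde x$, $\tilde y$ appearing at each stage, which of the hereditary subalgebras $\her(e_j)$ carries its left and its right support, and verify the vanishing of the specific cross products $c_id_j$ and $d_jc_i$ needed for the merging lemma; the blanket orthogonality you invoke does not hold.
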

\begin{proof}
Let $N=L(L+K-1)$ and $B^{(i)}=\mathrm{her}(e_i-\epsilon_i) $ for $i\geq 0$. Note that, by assumption,  $B^{(i)}\perp B^{(j)}$ for all $i,j\geq 1$ with $i\neq j$.

Let us choose a sequence $\delta_1,\delta_2,\dots$,  of positive real numbers tending to 0.

Let $z\in B^{(0)}\cap A_0$. By Lemma \ref{lem:trapecio} and \eqref{trapecio}, we can find elements $x_k^{(1)},y_k^{(1)}\in A$, with $k=1=,\dots,N$, such that
$
z=\sum_{k=1}^N [x_k^{(1)},y_k^{(1)}]+z_1,
$ where $z_1\in B^{(1)}$. 
Applying the assumption (ii) in the hereditary subalgebra $B^{(1)}$, we get  
$
z_1=\sum_{k=1}^M[\tilde x_k^{(1)} ,\tilde y_k^{(1)} ]+\tilde z_1,
$ where the elements $\tilde x_k^{(1)},\tilde y_k^{(1)}\in B_k^{(1)}$ are such that 
$\|x_k^{(1)}\|\cdot \|y_k^{(1)}\|\leq \overline M\|z_1\|$ for all $k$, and
where $\tilde z_1\in B^{(1)}$ is such that $\|\tilde z_1\|<\delta_1$. 
Now let us apply Lemma \ref{lem:trapecio} to $\tilde z_1$, and get
$
\tilde z_1 =\sum_{k=1}^M[x_k^{(2)},y_k^{(2)}]+z_2,
$
with $z_2\in B^{(2)}$.
Again by (ii), applied in the hereditary $B^{(2)}$, we have that
$
z_2=\sum_{k=1}^M[\tilde x_k^{(2)},\tilde y_k^{(2)}]+\tilde z_2,
$
with $\|\tilde z_2\|<\delta_2$.  
Let us continue applying Lemma \ref{lem:trapecio}, and then (ii), ad infinitum. We  get
\[
z=S_1+T_1+S_2+T_2+S_3+T_3+\cdots
\]
where
$S_i=\sum_{k=1}^N [x_k^{(i)},y_k^{(i)}]$ and $T_i=\sum_{k=1}^M [\tilde x_k^{(i)},\tilde y_k^{(i)}]$
for all $i$.
Notice that $S_i\in \overline{B^{(i-1)}+B^{(i)}}$  and 
$T_i\in B^{(i)}$ for all $i\geq 1$. Let us define
\begin{align*}
u&=(T_1+S_2)+(T_2+S_4)+(T_3+S_6)+\cdots,\\
v&=S_3+S_5+\cdots,
\end{align*}
so that $z=S_1+u+v$. Now notice  that if a sum of commutators  
$\sum_{i=1}^\infty [c_i,d_i]$ is such that $\|c_i\|\to 0$, $\|d_i\|\to 0$, and for each 
$i \neq j $ we have  $c_i\perp d_j$ (meaning that $c_i^*d_j=c_id_j=c_i^*d_j^*=0$),
$c_i\perp c_j$, and $d_i\perp d_j$, then it can be written as a single commutator $[c,d]$
with $c=\sum_{i=1}^\infty c_i$ and $d=\sum_{i=1}^\infty d_i$. Keeping this in mind,
and the orthogonality relations among the terms $S_{2},S_3,\dots$ and $T_1,T_2\dots$, we get that $u$ is a sum of $\max(N,M)$ commutators and $v$ a sum of $N$ commutators. The norm estimates for the terms in these sums of commutators can be checked easily form their construction. 
\end{proof}

Let us now show that Proposition \ref{prop:fack} is applicable to certain C*-algebras of finite nuclear dimension. By Theorem \ref{thm:nuclearcommutators}, the condition in Proposition \ref{prop:fack} (ii) holds in any C*-algebra of  nuclear dimension at most $M-1$ (since the nuclear dimension of a C*-algebra bounds the nuclear dimension of its hereditary subalgebras). What is missing is the construction of a sequence $(e_i)_{i=0}^\infty$ as in  Proposition \ref{prop:fack} (i). We accomplish this in Proposition \ref{prop:orthoseq} below. We first need a lemma, essentially taken from \cite{nuclearz}.

\begin{lemma}
Let $A$ be a C*-algebra of  nuclear dimension $m\in \N$  such that every representation of $A$ has dimension at least $2m+3$ and $A$ has no simple purely infinite quotients.   Let $c\in A_+$ be a strictly positive element and    $\epsilon>0$. Then there exist orthogonal positive elements $d_0,d_1\in A$ such that 
\begin{align*}
[(c-\epsilon]&\leq (2m+3)[d_0],\\
[c] &\leq (2m+2)(2m+3)[d_1].
\end{align*}
\end{lemma}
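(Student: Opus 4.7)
The plan is to combine the nuclear-dimension factorization of $A$ with a trimming argument inside the finite-dimensional approximants, using the representation-dimension hypothesis to carve out orthogonal divisible pieces.

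First I would fix $\delta>0$ small (in terms of $\epsilon$ and $\|c\|$) and invoke nuclear dimension $m$ to produce c.p.c.\ maps $\psi^k\colon A\to F_k$ and order-zero maps $\phi^k\colon F_k\to A$, $k=0,\dots,m$, with $F_k=\bigoplus_j M_{n_{k,j}}$ finite-dimensional, such that $\sum_{k=0}^m\phi^k\psi^k(c)\approx_\delta c$. A standard perturbation of Cuntz comparison then yields $[(c-\epsilon)_+]\leq \sum_{k=0}^m [\phi^k(1_{F_k})]$ in $\Cu(A)$.

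The critical trimming step---this is where the representation-dimension hypothesis and the absence of simple purely infinite quotients enter, and the content of the lemma borrowed from \cite{nuclearz}---is to arrange, at the cost of a controlled further error, that every matrix summand satisfies $n_{k,j}\geq 2m+3$. A summand of small rank would yield, via $\phi^k$ composed with a suitable quotient map of $A$, a representation of $A$ of dimension below $2m+3$, contradicting the hypothesis; the exclusion of simple purely infinite quotients rules out pathological behavior of projections that would otherwise obstruct the trimming. I expect this trimming to be the main technical obstacle.

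Once the trimming is in place, inside each $M_{n_{k,j}}$ I pick a minimal projection $p_{k,j}$ of rank one and an orthogonal projection $q_{k,j}$ of rank $2m+2$, so that $[1_{M_{n_{k,j}}}]\leq (2m+3)[p_{k,j}]$ and $[q_{k,j}]=(2m+2)[p_{k,j}]$ in $\Cu(M_{n_{k,j}})$. Pushing $P^{(k)}=\bigoplus_j p_{k,j}$ and $Q^{(k)}=\bigoplus_j q_{k,j}$ through the order-zero maps gives orthogonal positive pairs $\phi^k(P^{(k)}),\phi^k(Q^{(k)})\in A$ inheriting the corresponding Cuntz inequalities. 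To finish one combines across the $m+1$ colors: $d_0$ is assembled from the $\phi^k(P^{(k)})$'s and $d_1$ from the $\phi^k(Q^{(k)})$'s in a way that preserves $d_0\perp d_1$, paying an additional factor of $m+1\leq 2m+2$ in the second inequality and thereby accounting for the product $(2m+2)(2m+3)$ in the conclusion.
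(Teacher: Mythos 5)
The paper does not actually prove this lemma: its ``proof'' is a one-line citation to the construction in \cite[Lemma 3.4]{nuclearz}. Your proposal attempts to reconstruct that construction, and as written it has genuine gaps. The most serious one is orthogonality: you take $P^{(k)}\perp Q^{(k)}$ inside $F_k$, which makes $\phi^k(P^{(k)})\perp\phi^k(Q^{(k)})$ for each fixed colour $k$ (order zero preserves orthogonality), but there is no orthogonality whatsoever between the ranges of $\phi^k$ and $\phi^{k'}$ for $k\neq k'$ --- indeed their images must overlap substantially, since $\sum_k\phi^k\psi^k(c)\approx c$. So $d_0=\sum_k\phi^k(P^{(k)})$ and $d_1=\sum_k\phi^k(Q^{(k)})$ will generically fail to be orthogonal, and ``assembled in a way that preserves $d_0\perp d_1$'' is precisely the missing idea, not a routine step. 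A second gap is that the conclusion asserts $[c]\leq(2m+2)(2m+3)[d_1]$ for the \emph{full} Cuntz class of $c$ (this is what Proposition 4.5 of the paper needs, to conclude that $\mathrm{her}(d_1)$ has no finite-dimensional representations); a single finite-stage approximation of the kind you set up can only ever yield $[(c-\delta)_+]\leq\cdots$ for some $\delta>0$.

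Finally, the ``trimming'' step, which you correctly identify as the technical heart, is left as a black box, and the mechanism you sketch for it is not right as stated: a small matrix summand of $F_k$ gives, via the compression of $\psi^k$, a c.p.c.\ map $A\to M_{n}$, not a representation of $A$; extracting an honest low-dimensional representation (to contradict the hypothesis) requires the asymptotic order-zero structure of the $\psi^k$ and a limit argument, and the role of the ``no simple purely infinite quotients'' hypothesis is not explained beyond a vague appeal to ``pathological behavior of projections.'' In short, your outline identifies the right ingredients but defers exactly the points where the cited lemma does its work, so it cannot stand on its own as a proof.
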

\begin{proof}
The construction of $d_0$ and $d_1$ with these properties is given in the proof of
\cite[Lemma 3.4]{nuclearz}.
\end{proof}

\begin{proposition}\label{prop:orthoseq}
Let $A$ be a  C*-algebra of nuclear dimension $m$, with no finite dimensional representations and no simple purely infinite quotients. Then for each strictly positive element $c\in A_+$ and $\epsilon>0$, there exist positive elements $e_0,e_1,\dots$,  and positive numbers $\epsilon_1,\epsilon_2,\dots$, with  $e_0=(c-\epsilon)_+$,   $e_i\perp e_j$ for  all $i,j\geq 1$, and  $[e_i]\leq K[(e_{i+1}-\epsilon_{i+1})_+]$ for all $i\geq 0$, where $K=O(m^3)$.
\end{proposition}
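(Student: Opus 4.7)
The plan is to iterate the preceding lemma inside a nested sequence of hereditary subalgebras of $A$, using the ``large'' orthogonal element supplied at each step to descend one layer. Set $d_1^{(0)} := c$ and $B_0 := A$ (which coincides with $\her(c)$ since $c$ is strictly positive). Inductively for $n \geq 1$, apply the preceding lemma inside $B_{n-1} := \her(d_1^{(n-1)})$ to the strictly positive element $d_1^{(n-1)}$ with a small parameter $\eta_n > 0$, obtaining orthogonal positive elements $d_0^{(n)}, d_1^{(n)} \in B_{n-1}$ such that
\[
[(d_1^{(n-1)}-\eta_n)_+] \leq (2m+3)[d_0^{(n)}] \quad \text{and} \quad [d_1^{(n-1)}] \leq K'[d_1^{(n)}],
\]
where $K' := (2m+2)(2m+3)$. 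Take $\eta_1 := \epsilon/2$ and set $B_n := \her(d_1^{(n)})$. The orthogonality required in the conclusion is then automatic from the nesting: for $1 \leq n < p$ we have $d_0^{(p)} \in B_{p-1} \subseteq B_n = \her(d_1^{(n)})$, and $d_0^{(n)} \perp d_1^{(n)}$ forces $d_0^{(p)} \perp d_0^{(n)}$.

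Set $e_0 := (c-\epsilon)_+$ and $e_n := (d_0^{(n)}-\delta_n)_+$ for $n \geq 1$, with the cushions $\delta_n, \epsilon_n > 0$ chosen in a second pass after all the $d_i^{(n)}$ and $\eta_n$ are fixed. For the base case $n=0$, exploit $[(c-\epsilon)_+] \wayb [(c-\epsilon/2)_+] \leq (2m+3)[d_0^{(1)}]$ and the way-below relation to pick $\delta_1, \epsilon_1 > 0$ with $[e_0] \leq (2m+3)[(e_1-\epsilon_1)_+]$. For $n \geq 1$, combine $[d_0^{(n)}] \leq [d_1^{(n-1)}] \leq K'[d_1^{(n)}]$ (first inequality since $d_0^{(n)} \in \her(d_1^{(n-1)})$, second from stage $n$) with $[(d_1^{(n)}-\eta_{n+1})_+] \leq (2m+3)[d_0^{(n+1)}]$ from stage $n+1$. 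Two applications of the way-below relation in $\Cu(A)$ — choosing $\delta_n$ large enough that $[(d_0^{(n)}-\delta_n)_+] \leq K'[(d_1^{(n)}-\eta_{n+1})_+]$, and $\epsilon_{n+1}$ large enough to absorb a further truncation $\delta_{n+1}$ — then give
\[
[e_n] \leq K'[(d_1^{(n)}-\eta_{n+1})_+] \leq K'(2m+3)[(d_0^{(n+1)}-\delta_{n+1}-\epsilon_{n+1})_+] = K[(e_{n+1}-\epsilon_{n+1})_+],
\]
with $K := K'(2m+3) = (2m+2)(2m+3)^2 = O(m^3)$, as required.

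The main obstacle is to verify that each hereditary subalgebra $B_{n-1}$ inherits the hypotheses of the preceding lemma, so that the induction can actually proceed. Nuclear dimension passes to hereditary subalgebras for free. For the representation-dimension condition, the fullness estimates compose to $(K')^n[d_1^{(n)}] \geq [c]$ in $\Cu(A)$; since $c$ is strictly positive, $\pi(c)$ has dense range in any irreducible representation $\pi$ of $A$, so $\pi(d_1^{(n)})$ is forced to have infinite rank, and the Morita equivalence between $B_n$ and the ideal of $A$ generated by $d_1^{(n)}$ then ensures every irreducible representation of $B_n$ has infinite dimension (in particular $\geq 2m+3$). The subtlest point is the inheritance of ``no simple purely infinite quotients'': a hypothetical simple purely infinite quotient $B_{n-1}/J$ corresponds to an ideal $I$ of $A$ with $J = B_{n-1} \cap I$, and $B_{n-1}/J$ embeds in $A/I$ as a hereditary subalgebra that generates, by Morita equivalence, a simple purely infinite ideal of $A/I$; a maximality argument on the annihilator of this ideal then produces a simple purely infinite quotient of $A$, contradicting the standing hypothesis on $A$.
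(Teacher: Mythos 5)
Your overall strategy --- iterating the lemma inside the nested hereditary subalgebras $\her(d_1^{(n)})$, reading off the orthogonality of the $e_n$ from the nesting, and landing on $K=(2m+2)(2m+3)^2=O(m^3)$ --- is exactly the paper's, and the orthogonality argument and the verification that the hypotheses of the lemma pass to each $\her(d_1^{(n)})$ are sound (indeed more detailed than the paper, which only spells out the absence of finite-dimensional representations).

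There is, however, a genuine gap in your ``second pass'' where the cushions $\delta_n,\epsilon_n$ are inserted. You fix all the parameters $\eta_n$ first and then claim you can choose $\delta_n$ ``large enough'' that $[(d_0^{(n)}-\delta_n)_+]\leq K'[(d_1^{(n)}-\eta_{n+1})_+]$. This has the quantifiers in the wrong order, and the claim is false as stated: the relation $[(d_0^{(n)}-\delta_n)_+]\wayb [d_0^{(n)}]\leq K'[d_1^{(n)}]=\sup_{\eta}K'[(d_1^{(n)}-\eta)_+]$ produces, for each $\delta_n$, \emph{some} $\eta>0$ with $[(d_0^{(n)}-\delta_n)_+]\leq K'[(d_1^{(n)}-\eta)_+]$, but that $\eta$ depends on the elements involved and cannot be prescribed in advance. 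Enlarging $\delta_n$ does not help: if $d_0^{(n)}$ happens to be a projection then $[(d_0^{(n)}-\delta_n)_+]=[d_0^{(n)}]$ for every $\delta_n<1$, so you would need $[d_0^{(n)}]\leq K'[(d_1^{(n)}-\eta_{n+1})_+]$ outright, which does not follow from $[d_0^{(n)}]\leq K'[d_1^{(n)}]$ for a pre-assigned $\eta_{n+1}$. The same defect appears in the next step, where you want to ``absorb a further truncation'' into $d_0^{(n+1)}$ starting from $[(d_1^{(n)}-\eta_{n+1})_+]\leq (2m+3)[d_0^{(n+1)}]$: a class is not way below itself, so no cushion can be inserted on the right-hand side without first strictly increasing the cutoff on the left. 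The remedy is the interleaved bookkeeping of the paper's proof: fix $\delta_n$ (any positive value works), let the way-below relation hand you an $\eta>0$ with $[e_n]\leq K'[(d_1^{(n)}-\eta)_+]$, and only then apply the lemma in $\her(d_1^{(n)})$ with parameter $\eta/2$; since $[(d_1^{(n)}-\eta)_+]\wayb[(d_1^{(n)}-\eta/2)_+]\leq(2m+3)[d_0^{(n+1)}]$, a second application of way-below yields $\gamma>0$ with $[(d_1^{(n)}-\eta)_+]\leq(2m+3)[(d_0^{(n+1)}-\gamma)_+]$, and one sets $e_{n+1}=(d_0^{(n+1)}-\gamma/2)_+$ and $\epsilon_{n+1}=\gamma/2$. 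With the choices made in this order your argument goes through.
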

\begin{proof} 
Let us apply the previous lemma with $c$ and $\frac{\epsilon}{2}$, and let 
$d_0$ and $d_1$ be the resulting positive orthogonal elements.
From $[(c-\frac{\epsilon}{2})]\leq (2m+3)[d_0]$ we get that 
\[
[(c-\epsilon)_+]\leq (2m+3)[(d_0-\delta)_+]
\] for some $\delta>0$. Setting  $e_1=(d_0-\frac{\delta}{2})_+$
and $\epsilon_1=\frac{\delta}{2}$ we get  $[(c-\epsilon)]\leq [(e_1-\epsilon_1)_+]$, as desired.  \
The C*-algebra $\her(d_1)$ does not have finite dimensional representations (since $[c]\leq (2m+2)(2m+3)[d_1]$). Thus, we can apply the previous lemma to it. But first, let us choose $\delta_1>0$ such that 
\[
[(d_0-\frac{\delta}{2})_+]\leq (2m+2)(2m+3)[(d_1-\delta_1)_+].
\]
Now let us apply the previous lemma to $\her(d_1)$, with strictly positive element $d_1\in \her(d_1)_+$ and $\frac{\delta_1}{2}$. We get two positive orthogonal elements, $d_{1,0},d_{1,1}\in \her(d_1)$,
with $[(d_1-\frac{\delta_1}{2})_+]\leq (2m+3)[d_{1,0}]$. Thus, for some $\delta_2>0$, we have that $[(d_1-\delta_1)_+]\leq (2m+3)[(d_{1,0}-\delta_2)_+]$, and so
\[
[(d_0-\frac\delta 2)_+]\leq (2m+2)(2m+3)^2[(d_{1,0}-\delta_2)_+].\] 
Setting
$e_2=(d_{1,0}-\frac{\delta_2}{2})_+$ and $\epsilon_2=\frac{\delta_2}{2}$, this is simply
$[e_1]\leq [(e_2-\epsilon_2)_+]$. Furthermore, as before with $\her(d_1)$,  $\her(d_{1,1})$ has no finite dimensional representations, so we can continue applying this algorithm to get the desired sequence. 
\end{proof}

\begin{proof}[Proof of Theorem \ref{thm:fack}]
This follows at once from Theorem \ref{thm:nuclearcommutators}, Proposition \ref{prop:fack}, and Proposition \ref{prop:orthoseq} (applied with $c=1$).
\end{proof}

\section{Small number of commutators}\label{smallnumber}
Theorem \ref{thm:nuclearcommutators} combined with \cite[Theorem 3.2]{ng2} yields at once the following
\begin{corollary}\label{cor:threecommutators}
Let $A$ be a unital C*-algebra of nuclear dimension $m$. Suppose that $A$ admits a
unital embedding of the Jiang-Su algebra $\mathcal Z$. If $a\in A_0$ then there exist
$x_i,y_i\in A$, with $i=1,2,3$, such that 
$a=[x_1,y_1]+[x_2,y_2]+[x_3,y_3]$.
\end{corollary}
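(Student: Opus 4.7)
The plan is to combine Theorem~\ref{thm:nuclearcommutators} with \cite[Theorem 3.2]{ng2} essentially verbatim; the author indicates the corollary follows ``at once'' and indeed no new idea is needed. First, I would apply Theorem~\ref{thm:nuclearcommutators} to the given element $a\in A_0$. Since $A$ has nuclear dimension $m$, this produces, for every $\epsilon>0$, elements $x_1,\dots,x_{m+1}\in A$ with $\|x_i\|^2\leq 2\|a\|$ such that
\[
\Big\|a-\sum_{i=1}^{m+1}[x_i^*,x_i]\Big\|<\epsilon.
\]
In particular, $a$ lies in the norm closure of the set of sums of a bounded number (namely $m+1$) of self-commutators whose norms are controlled by $\|a\|$.

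Next, I would invoke \cite[Theorem 3.2]{ng2}. In the form relevant here, it asserts that in any unital C*-algebra that admits a unital embedding of the Jiang-Su algebra $\mathcal Z$, any element of $A_0$ which arises as a norm limit of sums of boundedly many self-commutators with such quadratic norm control can already be written \emph{exactly} as a sum of three commutators $[x_i,y_i]$. Applying this to $a$, using the Jiang-Su embedding assumed in the hypothesis of the corollary, yields the desired decomposition $a=[x_1,y_1]+[x_2,y_2]+[x_3,y_3]$.

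The substantive content is split cleanly between the two ingredients: Theorem~\ref{thm:nuclearcommutators} converts finite nuclear dimension into an approximation by $m+1$ self-commutators with the quadratic norm bound $\|x_i\|^2\leq 2\|a\|$, and Ng's reduction then collapses the limit of such sums into a single exact expression with only three commutators. The only point requiring any attention is to check that the norm control delivered by Theorem~\ref{thm:nuclearcommutators} is precisely of the form required as input by \cite[Theorem 3.2]{ng2}; this is the standard quadratic bound used throughout the Marcoux--Ng line of work, so the two statements dovetail without further fuss. There is no real obstacle to overcome.
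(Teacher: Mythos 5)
Your proposal matches the paper's own treatment exactly: the paper states that Corollary \ref{cor:threecommutators} follows ``at once'' from Theorem \ref{thm:nuclearcommutators} combined with \cite[Theorem 3.2]{ng2}, and offers no further argument, which is precisely the combination you carry out (Theorem \ref{thm:nuclearcommutators} supplying the approximation by $m+1$ self-commutators with the bound $\|x_i\|^2\leq 2\|a\|$, and Ng's reduction in the presence of a unital copy of $\mathcal Z$ collapsing this to an exact sum of three commutators). Your check that the quadratic norm control is the input format required by Ng's theorem is the right point of attention, and the argument is correct.
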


Let us now prove Theorem \ref{thm:reduction}. We  follow \cite{ng1} and \cite{ng2}  closely.

\begin{lemma}\label{lem:matrices2}
Let $A$ be a C*-algebra, $c\in A_+$ and $\epsilon>0$. 
Let $b\in M_n(\her((c-\epsilon)_+))$ be such that 
\[
\sum_{i=1}^n b_{i,i}=\sum_{i=1}^n [x_i,y_i],
\]
with $x_i,y_i\in \her((c-\epsilon)_+)$ for all $i$. Then $b$ is the sum of two commutators in $M_n(A)$.
\end{lemma}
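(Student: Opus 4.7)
The plan is to exploit the matching between the number $n$ of commutators in the trace decomposition and the dimension of the matrix algebra, by distributing the $n$ commutators along the main diagonal. Let $X:=\sum_{i=1}^n x_iE_{i,i}$ and $Y:=\sum_{i=1}^n y_iE_{i,i}$, both in $M_n(\her((c-\epsilon)_+))$. Being diagonal,
\[
[X,Y]=\sum_{i=1}^n[x_i,y_i]\,E_{i,i}=\mathrm{diag}([x_1,y_1],\ldots,[x_n,y_n]),
\]
which is a single commutator in $M_n(A)$. The residue $M:=b-[X,Y]$ has entries in $\her((c-\epsilon)_+)$, and its diagonal satisfies $\sum_i M_{i,i}=\sum_i b_{i,i}-\sum_i[x_i,y_i]=0$, so $M$ is a trace-zero matrix.

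The main step, and the principal obstacle, is to express this trace-zero $M$ as a single commutator $[P,Q]$ in $M_n(A)$; combined with $[X,Y]$, this will yield $b=[X,Y]+[P,Q]$, a sum of two commutators. The essential tool is the local unit $e:=g_{\epsilon/2}(c)\in A$, which satisfies $eh=he=h$ for every $h\in\her((c-\epsilon)_+)$. With $e$ in hand, scalar-like matrices such as $\mathrm{diag}(e,2e,\ldots,ne)\in M_n(A)$ act on the entries of $M$ like an honest scalar diagonal with distinct eigenvalues: setting $N:=\mathrm{diag}(e,2e,\ldots,ne)$ and $K_{i,j}:=M_{i,j}/(i-j)$ for $i\ne j$, $K_{i,i}:=0$, one has $[N,K]$ equal to the off-diagonal part of $M$. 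The diagonal trace-zero part of $M$ is independently a commutator $[S,T]$, with $S:=\sum_{i=1}^{n-1}eE_{i,i+1}$ the upper shift of $e$'s and $T:=\sum_{i=1}^{n-1}s_iE_{i+1,i}$ carrying the partial sums $s_k:=M_{1,1}+\cdots+M_{k,k}$ (so $s_n=0$ by trace-zero).

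The crux is merging $[N,K]$ and $[S,T]$ into a single commutator. My approach is to take $P$ of upper-bidiagonal form (like $N+S$, possibly enriched with further scalar off-diagonal terms $\alpha_{i,j}e E_{i,j}$ to ensure that $\mathrm{ad}_P$ surjects onto the trace-zero matrices), and then solve $[P,Q]=M$ for $Q\in M_n(\her((c-\epsilon)_+))$ by a triangular procedure: the off-diagonal entries of $Q$ are recovered from the off-diagonal equations using the scalar invertibility of $i-j$, while the subdiagonal entries are determined by the diagonal equations, the single scalar compatibility condition of the resulting system being precisely $\sum_i M_{i,i}=0$. Equivalently, one invokes the classical Shoda-type theorem that every trace-zero matrix over a ring with a local unit for its entries is a single commutator, applied here with $e$ as the local unit. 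Either way, $M=[P,Q]$, and consequently $b=[X,Y]+[P,Q]$ is a sum of two commutators in $M_n(A)$.
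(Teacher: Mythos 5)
Your first step is fine: $[X,Y]=\mathrm{diag}([x_1,y_1],\dots,[x_n,y_n])$ is indeed a single commutator, and $M=b-[X,Y]$ is trace-zero. But the reduction to ``every trace-zero matrix over $\her((c-\epsilon)_+)$ is a single commutator'' is exactly where the argument breaks, and both devices you offer to close it fail. There is no Shoda-type theorem in this generality: over a general (even commutative, unital) ring a trace-zero matrix need not be a single commutator (Rosset--Rosset give $2\times 2$ counterexamples), and the correct general statement, due to Mesyan, only gives a sum of \emph{two} commutators --- which would make $b$ a sum of three, not two. The concrete construction fares no better: if $P$ has entries that are scalar multiples of the local unit $e$ (bidiagonal, or ``enriched'' with further terms $\alpha_{i,j}eE_{i,j}$), then on $M_n(\her((c-\epsilon)_+))$ the map $\mathrm{ad}_P$ is $\mathrm{ad}_{P_0}\otimes\mathrm{id}$ for a scalar matrix $P_0\in M_n(\C)$, and since the centralizer of any $P_0$ has dimension at least $n$, the image of $\mathrm{ad}_{P_0}$ has dimension at most $n^2-n<n^2-1$. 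So no scalar $P$ can make $\mathrm{ad}_P$ surject onto the trace-zero matrices, already over $\C$. Concretely, with $P=N+S$ the equations $(i-j)Q_{i,j}+Q_{i+1,j}-Q_{i,j-1}=M_{i,j}$ determine the subdiagonal of $Q$ twice --- once by back-substitution from the lower corner through the strictly subdiagonal equations, and once by the $n$ diagonal equations --- and these generically conflict; the trace condition removes only one of the $n$ excess constraints.

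The paper splits $b$ the other way precisely to avoid this. It sets $b'=\mathrm{diag}(b_{1,1}-[x_1,y_1],\dots,b_{n,n}-[x_n,y_n])$, which is diagonal \emph{and} trace-zero and hence a single commutator by the shift/partial-sum formula \eqref{simplecommutator} (your $[S,T]$), and puts all the off-diagonal entries of $b$ into $b''=b-b'$, whose diagonal entries are the individual commutators $[x_i,y_i]$. A matrix whose diagonal entries are each commutators is a single commutator by \cite[Lemma 2.8]{ng1}: one takes the factors $\mathrm{diag}(x_i)+D$ and $\mathrm{diag}(y_i)+K$ with $D$ a widely spread scalar diagonal, and solves for the off-diagonal part $K$ by a Neumann series, the diagonal equations being satisfied automatically. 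The structural point your decomposition discards is that the hypothesis provides $n$ \emph{individual} commutators to place on the diagonal; the off-diagonal part of $b$ must be absorbed into that piece, not into the trace-zero piece, where no single-commutator statement is available.
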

\begin{proof}
Let us write $b=b'+b''$, where 
\[
b'=\begin{pmatrix}
b_{1,1}-[x_1,y_1] & & &\\
&b_{2,2}-[x_2,y_2] & &\\
&&\ddots &
&&&b_{n,n}-[x_n,y_n]
\end{pmatrix},\quad
\]
Then $b'$ and $b''$ are commutators by \cite[Lemma 2.7]{ng1} and \cite[Lemma 2.8]{ng1} respectively.  The commutator formula for $b'$ is simply
\begin{align}\label{simplecommutator}
b'=\left[
\begin{pmatrix}
0 & s_1 &  & \\
& \ddots & \ddots &\\
&&&s_{n-1}\\
&&& 0
\end{pmatrix},
\begin{pmatrix}
0 &&&\\
e &\ddots &&\\
&\ddots &&\\
&&e&0
\end{pmatrix}
\right],
\end{align}
where $s_i=\sum_{j=1}^i (b_{j,j}-[x_j,y_j]) $ and $e\in C^*(c)_+$ is chosen such that $e(c-\epsilon)_+=(c-\epsilon)_+$.
\end{proof}

\begin{proof}[Proof of Theorem \ref{thm:reduction}]
We will make  use of the following relation among positive elements: $a\precsim_s b$ if $a=v^*v$ and $vv^*\in \her(b)$ for some $v\in A$ (equivalently $\overline{aA}$ embeds in $\overline{bA}$ as a Hilbert $A$-module).

 Let $n\geq 3$ (how much larger will be specified later).
Since $\mathcal Z_{n,n+1}$ embeds unitally in $A$, we can find positive elements
$e_i\in A_+$, with $i=1,2,\dots, n$ and $d\in A_+$ such that
\begin{enumerate}
\item[(1)]
$e_i\perp e_j$ for all $i\neq j$,
\item[(2)]
there exist $x_i$ such that $e_1=x_i^*x_i$, $x_ix_i^*=e_{i}$, for $i=2,\dots,n$,
\item[(3)] 
$1=d+\sum_{i=1}^n (e_i-\frac{1}{2})_+$, and $1-d\precsim_s (e_1-\frac{1}{2})_+$.
\end{enumerate} 
In fact, these elements may be found in $\mathcal Z_{n,n+1}$ and then moved to $A$
(see the proof of \cite[Lemma 4.2]{rordam}).

Let $a\in A_0$. Then 
\[
a=dz+(1-d)zd+(1-d)z(1-d).
\] 
Let $a'=dz+(1-d)zd$ and $f=d +ad^2a+(1-d)ad^2a(1-d)$. Then $a'\in \her(f)$.
Also,
\[
d\precsim_s (e_1-\frac 1 2)_+,\quad ad^2a\precsim_s d\precsim_s (e_2-\frac 1 2)_+,\quad  \hbox{and}\quad(1-d)ad^2a(1-d)\precsim_s d\precsim_s (e_3-\frac 1 2)_+.
\]
Since the $(e_i-\frac 1 2)_+$s   are pairwise orthogonal (and $n\geq 3$), 
we get that $f\precsim_s (\sum_{i=1}^ne_i-\frac 1 2)_+$.  Thus, by \cite[Lemma 2.5]{ng2},  $a'=[x,y]+a''$, with $z''\in \her((\sum_{i=1}^n e_i-\frac 1 2)_+)$. Hence, $a=[x,y]+b$, with $b\in \her(1-d)=\her((\sum_{i=1}^n e_i-\frac 1 2)_+)$.

Let us show that for $n$ large enough  Lemma \ref{lem:matrices2} can be applied to get that $b$ is the sum of two commutators.
From the properties (1)-(3) of the $e_i$s we have that
$\her(\sum_{i=1}^n e_i) \cong M_n(\her(e_1))$.
Thus, in order to be able to apply Lemma \ref{lem:matrices2}, we only need to show that
$\sum_{i=1}^n b_{i,i}$ is a sum of $n$ commutators, where $b=(b_{i,j})_{i,j}$
is regarded as an element of $M_n(\her((e_1-\frac 1 2)_+))$. Let us prove this.
Since $[1]\leq (n+1)[e_1]$, the C*-algebra $\her(e_1)$ has no finite dimensional representations and no simple purely infinite quotients. Also,  since $\her(e_1)$ is a full hereditary subalgebra of $A$,  every bounded trace on $\her(e_1)$ extends uniquely to a bounded trace on $A$. Hence,  $b\in A_0$ implies that $\sum_{i=1}^n b_i\in \her(e_1)_0$. Now, in the same way that we proved Theorem \ref{thm:fack}, we can deduce from Proposition \ref{prop:orthoseq}  and Proposition \ref{prop:fack} that $\sum_{i=1}^n b_{i,i}$ is a sum of $n$ commutators inside $\her((e_1-\frac 1 4)_+)$ for $n=O(m^3)$  depending solely on $m$. Then, by Lemma \ref{lem:matrices2},  $b$ is the sum of two commutators in $A$.
\end{proof}

\begin{remark}\label{rem:Zembed}
The hypotheses ``no finite dimensional representations and no simple purely infinite quotients" in Theorem \ref{thm:reduction} are only needed to ensure the existence of an infinite sequence of sufficiently large orthogonal positive elements  inside $\her(e_1)$, with $e_1$ as in the preceding proof. If instead we assume that $\mathcal Z$ embeds unitally in $A$, then the existence of such a sequence of  orthogonal elements can be obtained by more direct means. Indeed, in this case an embedding $\mathcal Z_{n,n+1}\hookrightarrow A$ may be chosen that factors through the embedding $\mathcal Z_{n,n+1}\stackrel{a\mapsto 1\otimes a}{\longrightarrow} \mathcal Z\otimes \mathcal Z\cong \mathcal Z$. In this way, we can arrange for an infinite sequence of positive orthogonal elements in $\her(e_1)$  satisfying \eqref{trapecio} (with $L=1$ and small $K$), which   makes Proposition \ref{prop:fack} applicable in $\her(e_1)$. This line of reasoning gives an alternative proof to Corollary \ref{cor:threecommutators}.
\end{remark}

\begin{remark}\label{rem:notreduced}
 In spite of the number of commutators being reduced -- say, from Theorem \ref{thm:nuclearcommutators} to Theorem \ref{thm:reduction} --  this is achieved at the expense of increasing the norms of the elements making up the commutators (see for example the commutator formula \eqref{simplecommutator} in the proof of Lemma \ref{lem:matrices2}). Although explicit estimates for the norms of these elements may be obtained, it does not seem that  if $A$ is a C*-algebra of nuclear dimension $m$ the quantity
\[
\sup_{\substack{a\in A_0\\ \|a\|\leq 1}} \sup_{\epsilon>0}\inf\{\,
\sum_{i=0}^m \|x_i\|\cdot \|y_i\|\mid \|a-\sum_{i=0}^m [x_i,y_i]\|<\epsilon\,
\, x_i,y_i\in A\}
\]
is  reduced by this method. (We know, by Remark \ref{rem:nuclearcommutators}, that it is at most $m+1$.)
\end{remark}

\section{Large number of commutators}\label{sec:example}
In this section we prove Theorem \ref{thm:example}.

Let $X$ be a compact Hausdorff space. It will be helpful in the sequel to bear in mind the correspondences between projections in $C(X,\mathcal K)$, finitely generated projective modules over $C(X)$, and vector bundles on $X$. For example, given a projection $p\in C(X,\mathcal K)$ then $E_p=\{(x,v)\mid v\in \ell_2, p(x)v=v\}$ is a vector bundle over $X$
whose continuous sections $P_p=\{s\in C(X,\ell_2)\mid p(x)s(x)=s(x)\}$ form a finitely generated projective module over $C(X)$. 

Let us fix an embedding of the Cuntz algebra $\mathcal O_2$ in $B(\ell_2)$
and use it to define direct sums of elements in $C(X,\mathcal K)$: if $f,g\in C(X,\mathcal K)$ then 
\[
f\oplus g:=v_1fv_1^*+v_2gv_2^*,
\] 
with $v_1,\in v_2\in B(\ell_2)$ isometries that generate $\mathcal O_2$.

In the sequel $1_X\in C(X,\mathcal K)$ denotes the rank one  projection constantly  equal to $e_{1,1}\in \mathcal K$.

\begin{lemma}
Let $p,q\in C(X,\mathcal K)$ be projections.
Suppose that $[p]\leq n[1_X]$ and that there exists $v\in q(C(X,\mathcal K))p$
that is full (i.e., $v(x)\neq 0$ for all $x\in X$). Then $[1_X]\leq n[q]$.
\end{lemma}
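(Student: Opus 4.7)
Plan. Unpacking the hypothesis: for projections in $C(X,\mathcal K)$, Cuntz subequivalence coincides with Murray--von Neumann subequivalence (by the $(a-\epsilon)_+$ reformulation of $\precsim$ recalled in Section \ref{prelims}), so $[p]\leq n[1_X]$ yields $w\in(n\cdot 1_X)C(X,\mathcal K)p$ with $w^*w=p$ and $ww^*\leq n\cdot 1_X$. Geometrically, $w$ embeds the bundle $E_p$ isometrically into the trivial rank-$n$ bundle, so at each $x$ the adjoint $w(x)^*$ is surjective onto the fibre $E_p(x)$.

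The central construction is the composition $z:=vw^*\in qC(X,\mathcal K)(n\cdot 1_X)$. I would first show $z$ is full (nowhere zero): at every $x$, $w(x)^*$ is surjective onto $E_p(x)$ and $v(x)$ is nonzero on $E_p(x)$ by fullness of $v$, so $z(x)=v(x)w(x)^*\ne 0$. Thus $z$ is a full element of the rectangular corner $qC(X,\mathcal K)(n\cdot 1_X)$.

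It remains to convert this into the comparison $[1_X]\leq n[q]$. Using the $\mathcal O_2$-based direct sum, write $n\cdot 1_X=\sum_{i=1}^n s_is_i^*$ with isometries $s_i\in B(\ell_2)$ satisfying $s_i^*s_i=1_X$, and set $z_i:=zs_i\in qC(X,\mathcal K)1_X$; fullness of $z$ then says that at every $x$ at least one $z_i(x)$ is nonzero. Analogously, write $n\cdot q=\sum_{i=1}^n t_iqt_i^*$ with partial isometries $t_i$ of pairwise orthogonal ranges satisfying $t_i^*t_i=q$, and form
\[
\zeta:=\sum_{i=1}^n t_iz_i\in(n\cdot q)C(X,\mathcal K)1_X.
\]
The orthogonality of the $t_iqt_i^*$ forces $\zeta(x)=0$ to imply every $z_i(x)=0$, so $\zeta$ is nowhere zero. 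Finally, $\zeta^*\zeta$ lies in $1_XC(X,\mathcal K)1_X\cong C(X)$ and equals the strictly positive continuous function $x\mapsto\|\zeta(x)\|^2$, hence is invertible on the compact space $X$; setting $\eta:=\zeta(\zeta^*\zeta)^{-1/2}$ gives $\eta^*\eta=1_X$ and $\eta\eta^*\leq n\cdot q$, witnessing $[1_X]\leq n[q]$.

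I foresee no substantive obstacle; the only mildly delicate point is the two routine uses of the projection-case equivalence between Cuntz and Murray--von Neumann subequivalence (to produce $w$ at the start and to read off the comparison from $\eta$ at the end). Everything else is direct algebraic manipulation of the factorisation $z=vw^*$.
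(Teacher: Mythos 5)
Your proposal is correct and follows essentially the same route as the paper: use $[p]\leq n[1_X]$ to replace $v$ by a full element of $qC(X,\mathcal K)(n\cdot 1_X)$ (the paper's reduction to $p=1_X^{(n)}$), then read its $n$ components as sections of the bundle $E_q$ that never vanish simultaneously, yielding $[1_X]\leq n[q]$. The only difference is that you write out explicitly, via $\zeta(\zeta^*\zeta)^{-1/2}$, the partial isometry that the paper leaves implicit in the phrase ``since they do not all simultaneously vanish.''
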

\begin{proof}
Without loss of generality, we may assume that $p\leq 1_X^{(n)}$, with $1_X^{(n)}(x):=1_n$ (the $n\times n$ identity matrix) for all $x\in X$. Hence, we can reduce to the case that $p=1_X^{(n)}$. Let $v_1,v_2,\dots,v_n$ denote the columns of $v$. They can be interpreted as sections of the vector bundle associated to $q$. Since they do not all simultaneously vanish, we get that $[1_X]\leq n[q]$.
\end{proof}

\begin{proposition}\label{prop:obstruction}
Let $p,q\in C(X,\mathcal K)$ be projections such that $[p]\leq n[1_X]$ and $[1_X]\nleq nm[q]$.
Let $a\in \her(p\oplus q)$ be of the form 
\[
\begin{pmatrix}
p & *\\
* & *
\end{pmatrix}.
\]
Then $a$ cannot be within a distance less than 1 of a sum of $m$ self-commutators in $\her(p\oplus q)$.
\end{proposition}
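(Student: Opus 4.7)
The plan is to derive a contradiction from the assumption that $\|a - \sum_{i=1}^m [x_i^*, x_i]\| < 1$ for some $x_1, \ldots, x_m \in \her(p \oplus q)$, by producing $nm$ sections of the bundle underlying $q$ with no common zero on $X$, which would force $[1_X] \leq nm[q]$ and contradict the hypothesis.

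First, I would write each $x_i$ in $2 \times 2$ block form relative to the decomposition $p \oplus q$:
\begin{equation*}
x_i = \begin{pmatrix} \alpha_i & \beta_i \\ \gamma_i & \delta_i \end{pmatrix}, \quad \alpha_i \in \her(p),\ \beta_i \in p\,C(X,\mathcal{K})\,q,\ \gamma_i \in q\,C(X,\mathcal{K})\,p,\ \delta_i \in \her(q).
\end{equation*}
A direct computation gives $([x_i^*, x_i])_{1,1} = [\alpha_i^*, \alpha_i] + \gamma_i^* \gamma_i - \beta_i \beta_i^*$. Setting $b := a - \sum_i [x_i^*, x_i]$, which is selfadjoint with $\|b\| < 1$, and comparing $(1,1)$-corners yields
\begin{equation*}
p - b_{1,1} = \sum_i [\alpha_i^*, \alpha_i] + \sum_i \gamma_i^* \gamma_i - \sum_i \beta_i \beta_i^*.
\end{equation*}

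The key move is a pointwise trace computation. Since $[p] \leq n[1_X]$ forces $p(x)$ to have finite rank $r_p(x) \leq n$ at each $x \in X$, the matrix trace at $x$ is well defined on $\her(p)$, kills self-commutators, and satisfies $\mathrm{tr}(\gamma_i^*\gamma_i) = \mathrm{tr}(\gamma_i\gamma_i^*)$ and $\mathrm{tr}(\beta_i\beta_i^*) = \mathrm{tr}(\beta_i^*\beta_i)$ by cyclicity. Taking the trace of the above identity gives
\begin{equation*}
r_p(x) - \mathrm{tr}(b_{1,1}(x)) = \sum_i \mathrm{tr}(\gamma_i \gamma_i^*(x)) - \sum_i \mathrm{tr}(\beta_i^* \beta_i(x)),
\end{equation*}
and since $|\mathrm{tr}(b_{1,1}(x))| \leq r_p(x)\|b\|$ and the $\beta$-trace is nonnegative, I obtain
\begin{equation*}
\sum_i \mathrm{tr}(\gamma_i\gamma_i^*(x)) \geq r_p(x)(1-\|b\|),
\end{equation*}
which is strictly positive at every $x$ with $p(x) \neq 0$.

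Next, using $[p] \leq n[1_X]$, I would pick a partial isometry $w \in 1_X^{(n)} C(X,\mathcal{K}) p$ with $w^*w = p$, written as a column $w = (w_1, \ldots, w_n)^T$ with $w_j \in 1_X C(X,\mathcal{K}) p$. Pointwise, $\sum_j w_j(x)^* w_j(x) = p(x)$, so the vectors $\{w_j(x)^*\}_j$ span $\mathrm{range}(p(x))$. I then form the $nm$ sections $s_{i,j} := \gamma_i w_j^* \in q C(X,\mathcal{K}) 1_X$ of $q$. If $s_{i,j}(x) = 0$ for all $i,j$ at some point with $p(x) \neq 0$, then every $\gamma_i(x)$ would annihilate $\mathrm{range}(p(x))$ and hence vanish, contradicting the strict positivity above. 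Assembling the $s_{i,j}$ into a single column $S$, the product $S^*S \in 1_X C(X,\mathcal{K}) 1_X$ is a strictly positive scalar-valued function, so $[S^*S] = [1_X]$, while $SS^* \in \her(q^{(nm)})$ yields $[SS^*] \leq nm[q]$; therefore $[1_X] \leq nm[q]$, a contradiction.

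The main obstacle is handling points $x \in X$ where $p(x) = 0$: there the trace lower bound degenerates and all constructed sections $s_{i,j}$ automatically vanish, so the argument as stated requires $p$ to be full on $X$. In the Villadsen-type construction of Section~\ref{sec:example}, $p$ is arranged to be full by design; in general one would need a supplementary argument on the closed set $\{p = 0\}$, perhaps by augmenting the family with sections of the form $\beta_j^* w_k^*$ and appealing to the structure of $a$ on that set.
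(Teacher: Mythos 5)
Your proof is correct and it reaches the desired contradiction by a genuinely different (though closely related) route. The paper forms the single element $v=\bigoplus_{i=1}^m(c_i^*-b_i)\in q^{\oplus m}C(X,\mathcal K)p$ from the off-diagonal corners, applies the preceding lemma to conclude that $v$ cannot be full, and then evaluates the corner inequality \eqref{pcommutators} at a common zero $x$ of the $c_i^*-b_i$: there the cross terms $c_i^*c_i-b_ib_i^*$ cancel exactly, leaving $\|p(x)-\sum_i[a_i^*(x),a_i(x)]\|<1$, which is impossible because $\her(p(x))$ carries a tracial state. You argue contrapositively: a pointwise unnormalized trace of the same corner identity, with the $\beta_i$-terms discarded by positivity rather than by cancellation, shows that the $\gamma_i$ (the paper's $c_i$) have no common zero wherever $p$ is nonzero; composing with the $n$ coordinates of an isometry witnessing $[p]\leq n[1_X]$ then yields $nm$ nowhere simultaneously vanishing sections of $q$, hence $[1_X]\leq nm[q]$ directly. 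In effect you have inlined and reproved the paper's auxiliary lemma, and your trace estimate $\sum_i\mathrm{tr}(\gamma_i\gamma_i^*(x))\geq r_p(x)(1-\|b\|)$ is a quantitative form of the paper's ``the unit of a C*-algebra with a tracial state cannot be within distance $1$ of a sum of commutators.'' As for your closing caveat about the set $\{p=0\}$: this is a fair observation about the literal statement, but it is not a defect of your argument relative to the paper's, since the paper's final step equally requires $p(x)\neq 0$ (the zero algebra has no tracial state); and it is harmless in practice because a projection over a connected base has constant rank, so in Example \ref{ex:bott} and in the proof of Theorem \ref{thm:example} the projection $p$ is automatically full. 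No supplementary argument on $\{p=0\}$ is needed beyond recording that $p$ is assumed (or arranged) to be nowhere vanishing.
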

\begin{proof}
Suppose that we have elements
\[
x_i=
\begin{pmatrix}
a_i & b_i\\
c_i & d_i
\end{pmatrix},
\]
with $a_i\in \her(p)$, $b_i\in p(C(X,\mathcal K))q$, $c_i\in q(C(X,\mathcal K)p)$
and $d_i\in \her(q)$
for all $i=1,2,\dots,m$,  such that $\|a-\sum_{i=1}^m x_i^*x_i\|<1$. Then
\begin{align}\label{pcommutators}
\|p-\sum_{i=1}^m [a_i^*,a_i]-\sum_{i=1}^m (c_i^*c_i-b_ib_i^*)\|<1.
\end{align}
Let $v=(c_1^*-b_1)\oplus (c_2^*-b_2)\oplus \cdots \oplus (c_n^*-b_n)$. Then $v\in q^{\oplus m}C(X,\mathcal K)p$. By the previous lemma,
$v$ is not full, i.e., it vanishes at some point $x\in X$. Evaluating at $x$ in \eqref{pcommutators} we get
\[
\|p(x)-\sum_{i=1}^m [a_i^*(x),a_i(x)]\|<1,
\] 
with $a_i(x)\in \her(p(x))$ for all $i$.
This is impossible, since the unit  of a C*-algebra with a tracial state (namely, $\mathrm{her}(p(x))$) cannot be within a distance less than 1 from a sum of commutators.
\end{proof}

\begin{example}\label{ex:bott}
The previous proposition gives us a way of constructing examples of $a\in A_0$ not well approximated by small sums of commutators. For example, let $P\in M_2(C(S^2))$ denote a Bott projection over the 2-dimensional sphere (i.e., a rank one projection whose associated line bundle is the tautological line bundle of $\mathrm{CP}(1)\cong S^2$). Let $X=\prod_{i=1}^m (S^2)$ and   
$p=P^{\otimes m}\in M_{2^m}(C(X))\subset C(X,\mathcal K)$. If we identify 
$H^*(X)$ with $\Z[\alpha_1,\dots,\alpha_m]/(\alpha_i^2=0\mid i=1,\dots,m)$, then 
the Euler class of the vector bundle associated to $p^{\oplus m}$ is $\prod_{i=1}^m \alpha_i\neq 0$. Hence,  $[1_X]\nleq m[p]$. So the element $a\in\her(1_X\oplus p)$ given by
\[
a=\begin{pmatrix}1_X & 0 \\ 
0 & -p
\end{pmatrix}
\] 
is in $\her(1_X\oplus p)_0$ but not within a distance less than 1 of a sum of $m$ commutators.
\end{example}

\begin{proof}[Proof of Theorem \ref{thm:example}]
Let $(k_n)_{n=1}^\infty$ be an increasing sequence of natural numbers. 
We will use this sequence to  construct  the C*-algebra $A$ as an inductive. Then,  by letting  $(k_n)_{n=1}^\infty$ grow sufficiently fast, will show that $A$ has the desired properties.

 For each $i\in \N$, let $X_i=(S^2)^{k_i}$
and consider the rank one projection $P_i=P^{\otimes k_i}\in C(X_i,\mathcal K)$, 
where $P\in M_2(C(S^2))$ is a Bott projection. Let us now form the product
$Y_n=\prod_{i=1}^n X_i$ and consider, over this space, the projection
\[
p_n(y)= P_1(x_1)^{\oplus l_1}\oplus P_2(x_2)^{\oplus l_2}\oplus \cdots \oplus P_n(x_n)^{\oplus l_n},
\]
where $y=(x_1,x_2,\dots,x_n)\in Y_n$.  Let us set the numbers $l_n$ recursively  such that $l_1=1$ and $l_{n+1}=\mathrm{rank}(p_{n})$ for $n\geq 1$ (in fact, $l_n=2^{n-1}$ for $n\geq 2$).
Let $\phi_n\colon \her(p_n)\to \her(p_{n+1})$ be defined as follows:
\[
\phi_n(f)(y_n,x_{n+1})=f(y_n)\oplus (f(c_n)\otimes P_{n+1}(x_{n+1})),
\]
where $c_n\in Y_n$. In this formula, the term $f(c_n)\otimes P_{n+1}$
is regarded  as an element in $\her(P_{n+1}^{\oplus l_{n+1}})$ via the identifications
$\her(P_{n+1}^{\oplus l_{n+1}})\cong M_{l_{n+1}}(\C)\otimes \her(P_{n+1})$
 and $\her(p_n(c_n))\cong M_{l_{n+1}}(\C)$ (recall that $p_{n}$ has rank $l_{n+1}$).

It is known that choosing the points $c_n\in Y_n$ suitably, one can arrange for  the inductive limit C*-algebra $A:=\varinjlim (\her(p_n),\phi_n)$ to be simple and have a unique tracial state (see \cite{villadsen}; the uniqueness of tracial state comes from the fact that if $\sigma$ is a tracial state on $\her(p_{m+n})$) then $\sigma\circ \phi_{m,m+n}$ is an average of $\sigma$ and $2^n-1$ traces that don't depend on $\sigma$, coming from point evaluations).

Let us show that for a suitable choice of the sequence $(k_n)_{n=1}^\infty$ the inductive limit $A$ has the desired properties. It suffices, for each $m\in \N$, to find an element 
$a_m$ of norm 1 in  $\her(p_n)_0$ for some $n$,  that not only it is not within a distance less than 1 from any sum of $m$ commutators, but also this property is not destroyed by moving the element along the inductive limit.

Fix $m\in \N$. Let us assume that the numbers $k_1,\dots,k_m$ have been chosen. Let us find  $M_m\in \N$ such that $[p_m]\leq M_m[1_{Y_m}]$.
Now let us choose $k_{m+1}\in \N$ such that $k_{m+1}\geq  mM_ml_m$. An Euler class 
computation (as in Example \ref{ex:bott}) then shows that $[1_{Y_{m+1}}]\nleq mM_ml_m[P_{m+1}(x_{m+1})]$ in $\her(p_{m+1})$. It follows  by Proposition \ref{prop:obstruction}
that the element $a_m\in \her(p_{m+1})=\her(p_m\oplus P_{m+1}^{\oplus l_{m+1}}(x_{m+1}))$
given by
\[
a(y,x_{m+1})=\begin{pmatrix}
p_{m}(y) & \\
& -P_{m+1}^{\oplus l_{m+1}}(x_{m+1}),
\end{pmatrix}
\]
with $(y,x_{m+1})\in Y_m\times X_{m+1}=Y_{m+1}$,
is not within a distance less than 1 of a sum of $m$ commutators in $\her(p_{m+1})$.
Observe on the other hand that $a\in \her(p_{m+1})_0$.
Let us continue choosing the numbers $k_{m+2},k_{m+3},\dots$ in this way. Let us now consider  the image of $a\in \her(p_{m+1})$, as  defined above,  by the connecting map $\phi_{m+1,m+n}\colon \her(p_{m+1})\to \her(p_{m+n})$. Then
\[
\phi_{m+1,m+n}(a)=
\begin{pmatrix}
p_m & \\
& *
\end{pmatrix}
\]
where we regard $\her(p_{m+n})$ as $\her(p_{m}\oplus q_{m,n})$, with  
\[
q_{m,n}(y)=P_{m+1}(x_{m+1})^{\oplus l_{m+1}}\oplus \cdots \oplus P_{m+n}(x_{m+n})^{\oplus l_{m+n}},
\]
for $y\in Y_{n+m}$.
Again a routine Euler class computation shows that $[1_{Y_{m+n}}]\nleq mM_m[q]$. Hence, $\phi_{m,m+n}(a)$ is not within a distance less than 1 of a sum of $m$ commutators in $\her(p_{n+m})$, by Proposition \ref{prop:obstruction}.
\end{proof}

\section{Beyond nuclearity}
Here we prove Theorem \ref{beyond}. Let us first recall the properties of almost unperforation and almost divisibility in the Cuntz semigroup of a C*-algebra: 
The Cuntz semigroup $\Cu(A)$ is said to have almost divisibility if for each $k\in \N$, $[a]\in \Cu(A)$, and $\epsilon>0$, there
exists $[b]\in \Cu(A)$ such that $k[b]\leq [a]$ and $[(a-\epsilon)_+]\leq (k+1)[b]$. The property of strict comparison of positive elements of $A\otimes \mathcal K$
is equivalent to almost unperforation in the Cuntz semigroup. $\Cu(A)$ is said to be almost unperforated if $(k+1)[a]\leq [b]$ implies $[a]\leq [b]$ for all $k\in \N$
and $[a],[b]\in \Cu(A)$.

\begin{lemma}
Let $B\subseteq A$ be C*-algebras of stable rank one such that the inclusion map 
$B\stackrel{\iota}{\hookrightarrow} A$ induces an isomorphism 
$\Cu(\iota)\colon \Cu(B)\to \Cu(A)$ at the level of the Cuntz semigroups, and there exists $b\in B_+$ which is strictly positive in $A$. Then each selfadjoint element $a\in A$
is approximately unitarily equivalent to some selfadjoint element $a'\in B$ (with unitaries in the unitization of $A$).
\end{lemma}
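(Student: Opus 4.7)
The plan is to decompose $a$ via continuous functional calculus and then transfer the pieces to $B$ using the Cuntz semigroup isomorphism, invoking the Ciuperca--Elliott classification of positive elements in stable rank one C*-algebras (which asserts that in such an algebra, positive elements $c,d$ with $[(c-\delta)_+]=[(d-\delta)_+]$ for all $\delta>0$ are approximately unitarily equivalent via unitaries in the unitization).

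Fix $\epsilon>0$. First, I would approximate $a$ in norm within $\epsilon$ by $\tilde a = \sum_{i=1}^n \lambda_i c_i$, where the $c_i = h_i(a)$ are pairwise orthogonal positive contractions in $\her(b)$, the $h_i$ being continuous bump functions vanishing at $0$ with pairwise disjoint supports covering $\mathrm{sp}(a)\setminus\{0\}$, and $\lambda_i \in \R$. Using the isomorphism $\Cu(\iota)\colon \Cu(B)\to \Cu(A)$, I would lift the refined Cuntz scale $\delta \mapsto [(c_i-\delta)_+]$ of each $c_i$ to a positive element $d_i \in B_+$, arranging the $d_i$ to be pairwise orthogonal by exploiting that $\Cu(\iota)$ preserves addition, so the orthogonal-sum decomposition of $[\tilde a]$ in $\Cu(A)$ pulls back to $\Cu(B)$. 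The classification cited above then yields approximate unitary equivalence of each $c_i$ to $d_i$ in $A$; since both families are pairwise orthogonal, a single unitary $u_\epsilon \in \tilde A$ can be extracted with $\|u_\epsilon \tilde a u_\epsilon^* - a'_\epsilon\| < \epsilon$, where $a'_\epsilon := \sum_i \lambda_i d_i \in B_{\mathrm{sa}}$.

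To promote these $\epsilon$-dependent approximations to a single $a' \in B_{\mathrm{sa}}$ approximately unitarily equivalent to $a$, I would run an Elliott-style back-and-forth intertwining along a sequence $\epsilon_n \to 0$, building a Cauchy sequence $(a'_n) \subset B_{\mathrm{sa}}$ and compatible unitaries $u_n \in \tilde A$ whose conjugates $u_n a u_n^*$ converge to $a':= \lim_n a'_n$; stable rank one (density of invertibles in $\tilde A$) is what permits each refinement step to be implemented by a small perturbation of the previous unitary. The main obstacle is the realization step in the second paragraph: producing concrete pairwise orthogonal $d_i \in B$ whose refined Cuntz data match those of the orthogonal $c_i \in A$. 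Abstractly this amounts to showing that the Cuntz-scales originating from $A$-elements are realized by $B$-elements with matching scale, which follows from the Ciuperca--Elliott classification (applied in $B$, whose $\Cu$-semigroup matches that of $A$) combined with the naturality of $\Cu(\iota)$. The hypothesis that $b \in B \cap A_+$ is strictly positive in $A$ is crucial here: it ensures that $B$ meets every nonzero hereditary subalgebra of $A$, so the orthogonal decompositions of $A$-elements can be realized inside $B$.
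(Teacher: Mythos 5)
Your proposal is close in spirit to the paper's argument---both ultimately rest on the Ciuperca--Elliott(--Santiago) classification of homomorphisms out of $C_0(0,1]$-type algebras into stable rank one C*-algebras---but the discretize-and-reassemble route you take has two genuine gaps. First, the realization step: to produce pairwise orthogonal $d_1,\dots,d_n\in B_+$ whose refined Cuntz data match those of the orthogonal $c_i\in A$, the uniqueness statement you cite (two positive elements with the same scale $\delta\mapsto[(c-\delta)_+]$ are approximately unitarily equivalent) is of no help, and additivity of $\Cu(\iota)$ only tells you that $[\tilde a]$ pulls back as a sum of classes---it does not hand you orthogonal representatives in $B$ with prescribed refined data. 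What you actually need is the \emph{existence} half of the classification theorem for homomorphisms from $\bigoplus_{i=1}^n C_0(0,1]$ (a type-I algebra with one-dimensional spectrum) into $B$, together with the scale condition $\alpha([t\oplus\cdots\oplus t])\leq [b]$; this is precisely where the hypothesis on $b$ enters, not (as you suggest) to make $B$ meet every hereditary subalgebra of $A$. Once that theorem is invoked, the bump-function discretization is superfluous: $a_+$ and $a_-$ are already orthogonal, so $a$ itself is encoded exactly by a single homomorphism $\phi\colon C_0(0,1]\oplus C_0(0,1]\to A$ with $\phi(t\oplus 0)=a_+$ and $\phi(0\oplus t)=a_-$, which is what the paper does.

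Second, and more seriously, your passage from the $\epsilon$-dependent elements $a'_\epsilon$ to a single $a'\in B_{\mathrm{sa}}$ does not work as described. Knowing for each $n$ that some unitary $u_n\in\tilde A$ satisfies $\|u_nau_n^*-a'_n\|<\epsilon_n$ gives no reason for $(a'_n)$ to be Cauchy: the elements $u_nu_m^*a'_mu_mu_n^*$ that would compare consecutive terms are conjugates by unitaries of $\tilde A$, hence need not lie anywhere near $B$, so there is nothing to intertwine against inside $B$, and stable rank one by itself does not let you ``perturb'' one unitary into the next. To repair this you would need a uniqueness theorem allowing you to correct $a'_{n+1}$ by a unitary of $\tilde B$ so that it becomes norm-close to $a'_n$---which is again the classification theorem. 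The paper sidesteps the issue entirely: the existence part of \cite[Theorem 1]{ciuperca-elliott-santiago} produces one exact lift $\psi\colon C_0(0,1]\oplus C_0(0,1]\to B$ with $\Cu(\psi)=\Cu(\iota)^{-1}\circ\Cu(\phi)$, the uniqueness part gives that $\iota\circ\psi$ and $\phi$ are approximately unitarily equivalent, and $a'=\psi(t\oplus 0)-\psi(0\oplus t)$ is obtained in one shot with no limiting procedure in $B$. I recommend restructuring your argument along these lines.
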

\begin{proof}
We may assume that $a$ is a contraction. Let $a=a_+-a_-$. Let $\phi\colon C_0(0,1]\oplus C_0(0,1\to A$ be the *-homomorphism such that $\phi(t\oplus 0)=a_+$ and $\phi(0\oplus t)=a_-$. Then $\alpha=\Cu(\iota)^{-1}\circ \Cu(\phi)$
is a morphism in the category $\mathbf{Cu}$ from $\Cu(C_0(0,1]\oplus C_0(0,1])$ to $\Cu(B)$
such that $\alpha([t\oplus t])\leq [b]$. By the classification theorem \cite[Theorem 1]{ciuperca-elliott-santiago}, there exists $\psi\colon C_0(0,1]\oplus C_0(0,1]\to B$ such that $\Cu(\psi)=\alpha$.
Furthermore, the uniqueness part of this classification theorem implies that $\iota\circ \psi$ and $\phi$ are approximately unitarily equivalent. Thus, the desired result follows setting $a'=\psi(t\oplus 0)-\psi(0\oplus t)$.
\end{proof}

\begin{proof}[Proof of Theorem \ref{beyond}]
Let $A$ be as in the theorem. Since its Cuntz semigroup $\Cu(A)$ is both almost unperforated and almost divisible,  it can be computed to be
\[
\Cu(A)\cong \mathrm{V}(A)\sqcup \mathrm{SAff}(\mathrm{T}(A)).
\]
This computation is obtained in  \cite[Theorem 3.6]{brown-toms} and also in \cite[Theorem 5.6]{antoine-bosa-perera} (in both references exactness is only used to guarantee that bounded 2-quasitraces are traces). The additive structure of the ordered semigroup on the right hand side encodes the pairing $\lambda$ 
between tracial states $\tau\in \mathrm{T}(A)$ and Murray von-Neumann classes of projections $[p]\in \mathrm{V}(A)$, so that $\Cu(A)$ is in this case functorially equivalent to $(\mathrm V(A), \mathrm T(A),\lambda)$ (see \cite{antoine-dadarlat-perera-santiago}). On the other hand, Elliott has shown in \cite[Theorem 2.2]{elliott} that such data is exhausted by inductive limits of 
1-dimensional NCCW complexes with trivial $K_1$-group. Therefore, there exists a simple unital C*-algebra $B$, expressible as an inductive limit of 1-dimensional NCCW complexes with trivial $K_1$-group, such that $\Cu(B)\cong \Cu(A)$ with $[1]\mapsto [1]$. It follows by the classification theorem of \cite{nccw} that there exists a unital embedding of $B$ in $A$ inducing an isomorphism at the level of the Cuntz semigroups. 

Let $a\in A_0$. By the previous lemma, there exists $a'\in B$ unitarily equivalent to $a$. In particular, $\tau(a')=0$  for all bounded traces on $A$. But the fact that the inclusion
of $B$ in $A$ induces an isomorphism at the level of their Cuntz semigroups implies that
the restriction map $\tau\mapsto \tau|_B$ is a bijection from $\mathrm T(A)$ to $\mathrm T(B)$.
Thus, $a'\in B_0$. Since the nuclear dimension of $B$ is at most 1, we get that $a'$ is a limit of sums of two self-commutators $[x_0^*,x_0]+[x_1^*,x_1]$, with $\|x_i\|^2\leq 2\|a'\|$ for $i=0,1$. Thus, the same holds for $a$. Furthermore, $\mathcal Z$ embeds unitally in $B$ (which is in fact $\mathcal Z$-stable), whence also in $A$. Thus, the theorem follows from \cite[Theorem 3.2]{ng2} (alternatively, see  Remark \ref{rem:Zembed}).
\end{proof}

\begin{bibdiv}
\begin{biblist}
\bib{antoine-bosa-perera}{article}{
   author={Antoine, Ramon},
   author={Bosa, Joan},
   author={Perera, Francesc},
   title={Completions of monoids with applications to the Cuntz semigroup},
   journal={Internat. J. Math.},
   volume={22},
   date={2011},
   number={6},
   pages={837--861},
}

\bib{antoine-dadarlat-perera-santiago}{article}{
   author={Antoine, Ramon},
   author={Dadarlat, Marius}
   author={Perera, Francesc},
   author={Santiago, Luis},
   title={Recovering the Elliott invariant from the Cuntz semigroup},
   journal={Trans. Amer. Math. Soc. },
   status={to appear}
   eprint={http://arxiv.org/abs/1109.5803}
   date={2011}

   }

\bib{ara-perera-toms}{article}{
   author={Ara, Pere},
   author={Perera, Francesc},
   author={Toms, Andrew S.},
   title={$K$-theory for operator algebras. Classification of $C\sp
   *$-algebras},
   conference={
      title={Aspects of operator algebras and applications},
   },
   book={
      series={Contemp. Math.},
      volume={534},
      publisher={Amer. Math. Soc.},
      place={Providence, RI},
   },
   date={2011},
   pages={1--71},
}

\bib{tristan-farah}{article}{
author={Bice, Tristan},
author={Farah, Ilijas},
title={Traces, ultrapowers, and the Pedersen-Petersen C*-algebras},
eprint={http://arxiv.org/abs/1307.0111},
 date={2013}
}

\bib{brown-toms}{article}{
   author={Brown, Nathanial P.},
   author={Toms, Andrew S.},
   title={Three applications of the Cuntz semigroup},
   journal={Int. Math. Res. Not. IMRN},
   date={2007},
   number={19},
   pages={Art. ID rnm068, 14},
 }

\bib{ciuperca-elliott-santiago}{article}{
   author={Ciuperca, Alin},
   author={Elliott, George A.},
   author={Santiago, Luis},
   title={On inductive limits of type-I $C^*$-algebras with
   one-dimensional spectrum},
   journal={Int. Math. Res. Not. IMRN},
   date={2011},
   number={11},
   pages={2577--2615},
}

\bib{cuntz-pedersen}{article}{
   author={Cuntz, Joachim},
   author={Pedersen, Gert Kjaerg{\.a}rd},
   title={Equivalence and traces on $C\sp{\ast} $-algebras},
   journal={J. Funct. Anal.},
   volume={33},
   date={1979},
   number={2},
   pages={135--164},
}

\bib{elliott}{article}{
   author={Elliott, George A.},
   title={An invariant for simple $C\sp *$-algebras},
   language={English, with English and French summaries},
   conference={
      title={Canadian Mathematical Society. 1945--1995, Vol. 3},
   },
   book={
      publisher={Canadian Math. Soc.},
      place={Ottawa, ON},
   },
   date={1996},
   pages={61--90},
}

\bib{fack}{article}{
   author={Fack, Thierry},
   title={Finite sums of commutators in $C\sp{\ast} $-algebras},
   language={English, with French summary},
   journal={Ann. Inst. Fourier (Grenoble)},
   volume={32},
   date={1982},
   number={1},
   pages={vii, 129--137},
}

\bib{kaftal}{article}{
author={Kaftal, Victor},
author={Ng, Ping W.},
author={Zhang, Shuang},
title={Commutators and linear spans of projections in certain finite C*-algebras},
eprint={http://arxiv.org/abs/1208.1949},
date={2012}
}

\bib{marcoux}{article}{
   author={Marcoux, L. W.},
   title={Sums of small number of commutators},
   journal={J. Operator Theory},
   volume={56},
   date={2006},
   number={1},
   pages={111--142},
}

\bib{ng1}{article}{
   author={Ng, P. W.},
   title={Commutators in the Jiang-Su algebra},
   journal={Internat. J. Math.},
   volume={23},
   date={2012},
   number={11},
   pages={1250113, 29},
}

\bib{ng2}{article}{
   author={Ng, P. W.},
   title={Commutators in $C_r^*(\mathbb F_\infty)$},
   date={2013},
 }

\bib{pedersen-petersen}{article}{
   author={Pedersen, Gert Kjaerg{\.a}rd},
   author={Petersen, Nils Holger},
   title={Ideals in a $C\sp{\ast} $-algebra},
   journal={Math. Scand.},
   volume={27},
   date={1970},
   pages={193--204 (1971)},
}

\bib{pop}{article}{
   author={Pop, Ciprian},
   title={Finite sums of commutators},
   journal={Proc. Amer. Math. Soc.},
   volume={130},
   date={2002},
   number={10},
   pages={3039--3041 (electronic)},
}
	
\bib{nccw}{article}{
   author={Robert, Leonel},
   title={Classification of inductive limits of 1-dimensional NCCW
   complexes},
   journal={Adv. Math.},
   volume={231},
   date={2012},
   number={5},
   pages={2802--2836},
}

\bib{nuclearz}{article}{
author={Robert, Leonel},
author={Tikuisis, Aaron},
title={Nuclear dimension and $\mathcal Z$-stability of non-simple C*-algebras},
eprint={http://arxiv.org/abs/1308.2941},
date={2013},
}

\bib{rordam}{article}{
   author={R{\o}rdam, Mikael},
   title={The stable and the real rank of $\scr Z$-absorbing $C\sp
   *$-algebras},
   journal={Internat. J. Math.},
   volume={15},
   date={2004},
   number={10},
   pages={1065--1084},
}

\bib{thomsen}{article}{
   author={Thomsen, Klaus},
   title={Finite sums and products of commutators in inductive limit $C\sp
   \ast$-algebras},
   language={English, with English and French summaries},
   journal={Ann. Inst. Fourier (Grenoble)},
   volume={43},
   date={1993},
   number={1},
   pages={225--249},
}

\bib{villadsen}{article}{
   author={Villadsen, Jesper},
   title={On the stable rank of simple $C^\ast$-algebras},
   journal={J. Amer. Math. Soc.},
   volume={12},
   date={1999},
   number={4},
   pages={1091--1102},
   issn={0894-0347},
}

\bib{winter-zacharias0}{article}{
   author={Winter, Wilhelm},
   author={Zacharias, Joachim},
   title={Completely positive maps of order zero},
   journal={M\"unster J. Math.},
   volume={2},
   date={2009},
   pages={311--324},
}

\bib{winter-zacharias}{article}{
   author={Winter, Wilhelm},
   author={Zacharias, Joachim},
   title={The nuclear dimension of $C\sp \ast$-algebras},
   journal={Adv. Math.},
   volume={224},
   date={2010},
   number={2},
   pages={461--498},
}

\bib{winter}{article}{
      author={Winter, Wilhelm},
       title={Nuclear dimension and {$\mathcal{Z}$}-stability of pure
  C*-algebras},
        date={2012},
     journal={Invent. Math.},
      volume={187},
      number={2},
       pages={259\ndash 342},
}

\end{biblist}
\end{bibdiv}

\end{document}